\documentclass[final,reqno,onefignum,onetabnum]{siamart171218}
\usepackage{subdepth}
\usepackage{overpic}
\usepackage{amsmath,amstext,amsbsy,amssymb,mathdots}
\usepackage{booktabs,bm}
\usepackage{mathrsfs}
\usepackage{tikz,cite}
\usepackage{courier}
\usetikzlibrary{positioning}
\usetikzlibrary{shapes,arrows}
\usepackage[fleqn,tbtags]{mathtools}
\usepackage{amsfonts}
\usepackage{relsize}
\usepackage{xcolor,colortbl}
\usepackage{psfrag}
\usepackage{alltt}
\usepackage{arydshln}
\usepackage{enumerate}
\usepackage{epstopdf}
\usepackage{enumitem}
\usepackage{multirow}
\usepackage{algorithmic}
\usepackage{framed}
\usepackage{cancel}
\usepackage{booktabs}
\usepackage{xargs}

\usepackage[colorinlistoftodos,prependcaption,textsize=tiny]{todonotes}
\reversemarginpar
\newcommandx{\attodo}[2][1=]{\todo[linecolor=red,backgroundcolor=red!25,bordercolor=red,#1]{#2}}
\newcommandx{\mgtodo}[2][1=]{\todo[linecolor=blue,backgroundcolor=blue!25,bordercolor=blue,#1]{#2}}
\newcommandx{\agreedtodo}[2][1=]{\todo[linecolor=green,backgroundcolor=green!25,bordercolor=green,#1]{#2}}

\colorlet{lightgray}{gray!40}

\newcommand{\<}{\left\langle }
\let\>\undefined
\newcommand{ \>}{\right\rangle }
\DeclareMathOperator*{\argmin}{arg\,min}

\newcommand{\mV}{\mathcal{V} }
\newcommand{\mW}{\mathcal{W} }
\newcommand{\mL}{\mathcal{L} }
\newcommand{\mR}{\mathcal{R}}
\newcommand{\K}{\mathcal{K} }
\newcommand{\B}{\mathcal{B} }
\newcommand{\Hone}{\smash{\mathcal{H}^1(\Omega)}}
\newcommand{\Htwo}{\smash{\mathcal{H}^2(\Omega)}}
\newcommand{\Htwozero}{\smash{\mathcal{H}^1_0(\Omega)\cap\mathcal{H}^2(\Omega)}}

\newcommand{\Honezero}{\smash{\mathcal{H}_0^1(\Omega)}}
\newcommand{\Ltwo}{\smash{L^2(\Omega)}}
\newcommand{\PiLPi}{\smash{\Pi_{\mV_0}^*\mathcal{L}\Pi_{\mV_0}}}

\newsiamremark{assumption}{Assumption} 
\crefname{assumption}{Assumption}{Assumptions} 
\newsiamremark{problem}{Problem} 
\crefname{problem}{Problem}{Problems} 

\newcolumntype{"}{@{\hskip\tabcolsep\vrule width 1pt\hskip\tabcolsep}}
\setlist[description]{font=\normalfont\space}

\setlength{\marginparwidth}{3cm}

\title{Continuous analogues of Krylov methods for differential operators\thanks{Submitted to the editors \today.
\funding{This work is supported by National Science Foundation grant No.~1645445.}}}
\author{Marc Aur\`ele Gilles\thanks{Center for Applied Mathematics, Cornell University, Ithaca, NY 14853. (\email{mtg79@cornell.edu})} \and Alex Townsend\thanks{Department of Mathematics, Cornell University, Ithaca, NY  14853. (\email{townsend@cornell.edu})}}
\headers{Krylov Subspace Methods for Differential Operators}{Marc Aur\`ele Gilles and Alex Townsend}
\begin{document}
\maketitle

\begin{abstract}
Analogues of the conjugate gradient method, MINRES, and GMRES are derived for solving boundary value problems (BVPs) involving second-order differential operators. Two challenges arise: imposing the boundary conditions on the solution while building up a Krylov subspace, and guaranteeing convergence of the Krylov-based method on unbounded operators. Our approach employs projection operators to guarantee that the boundary conditions are satisfied, and we develop an operator preconditioner that ensures that an approximate solution is computed after a finite number of iterations. The developed Krylov methods are practical iterative BVP solvers that are particularly efficient when a fast operator-function product is available.
\end{abstract}

\begin{keywords}
Krylov methods, conjugate gradient, differential operators, spectral methods
\end{keywords}

\begin{AMS}
65F10, 65N35, 47E05
\end{AMS}

\section{Introduction}\label{sec:introduction}
Krylov methods, such as the conjugate gradient (CG) method, MINRES, and GMRES, are iterative algorithms that solve $Ax=b$ using matrix-vector products~\cite{van2003iterative}. After $k$ iterations, they typically compute an approximate solution to $Ax=b$ from the Krylov subspace $\smash{\mathcal{K}_k(A,b) = {\rm Span}\{b,Ab,\ldots,A^{k-1}b\}}$. They provide a toolkit for solving large sparse or structured linear systems, which arise in the discretization of differential equations~\cite{elman2014finite}.

In this paper, we describe operator analogues of Krylov methods for solving two-point boundary value problems (BVPs)~\cite[Chap.~6]{evans2010partial} that avoid discretizing the differential operator. Our Krylov methods directly solve
\begin{equation}
\mathcal{L}u = f\quad \text{on } \Omega =(-1,1), \qquad u(\pm 1) = 0,
\label{eq:PDE}
\end{equation}
where $\smash{\mathcal{L}u = -(a(x)u'(x))' + b(x)u'(x) + c(x) u}$, $\mathcal{L}:\Htwozero\rightarrow L^2( \Omega)$\footnote{ This is a slight restriction from the more typical $\mathcal{L}:\mathcal{H}^1_0( \Omega) \rightarrow \mathcal{H}^{-1}( \Omega)$ setup. However, this restriction is natural in the present context where we develop practical algorithms which apply $\mathcal{L}$ to functions by weak differentiation operations and function products instead of having to revert to a bilinear form interpretation of the function product (see~\cref{sec:CG1dAlgorithm}). }, $a,b,c\in L^\infty(\Omega)$ and $f\in L^2(\Omega)$.

If there are no additional assumptions on $\mathcal{L}$, then we propose an analogue of GMRES to solve~\cref{eq:PDE} (see \cref{sec:GMRES}). If $b(x)=0$, then $\mathcal{L}$ is self-adjoint, which is analogous to a symmetric matrix, and we propose an analogue of MINRES (see \cref{sec:MINRES}). When $a(x)>0$, $b(x)=0$, and $c(x)\geq 0$, $\mathcal{L}$ is self-adjoint with real positive eigenvalues~\cite[Sec.~6.5, Thm.~1]{evans2010partial}, which is analogous to a symmetric positive definite matrix, and we propose an analogue of the CG method (see \cref{sec:CGtheory}).

To see the difficulties in developing a Krylov-based method for differential operators, consider solving $-u''(x) = 1-x^2$ on $\Omega =(-1,1)$ with $u(\pm 1) = 0$. The exact solution is $u (x) = (x^4 - 6x^2 + 5)/12$. A naive generalization of the Krylov subspace is $\mathcal{K}_k(\mL, f) = {\rm Span} \{ f , \mL f, \mL\left( \mL f \right) , \dots, \mL^{k-1}f \}$ with $f = 1-x^2$. Since $\mL u = -u'' $, this leads to $\mathcal{K}_k( \mL, f) = {\rm Span}\{ 1-x^2, 2 \}$ for $k\geq 2$. This example illustrates that such an approach is flawed, as $\mathcal{K}_k( \mL, f)$ does not contain a good approximation to the exact solution. Moreover, the boundary conditions are not imposed because $\mathcal{K}_k( \mL,f)\not\subset \mathcal{H}^1_0(\Omega)$ for $k\geq 2$. There are at least three major theoretical issues to overcome:
\begin{description}[leftmargin=*,noitemsep]
\item \begin{problem}\label{problem1} Since $f\not\in \mathcal{H}^1_0(\Omega)$ and ${\rm Range}(\mL) \not\subset \mathcal{H}^1_0(\Omega)$, how does one construct a Krylov subspace that satisfies the boundary conditions? Our answer involves using orthogonal projection operators to ensure that each term in the Krylov subspace is in $\Honezero$ (see \cref{sec:RestrictedCG}), and solving an ancillary problem (see \cref{sec:GeneralRHS}). \end{problem}
\item \begin{problem}\label{problem2} Since $\mathcal{L}: \Htwozero \rightarrow L^2(\Omega)$, how does one repeatedly apply operator-function products that are necessary to build up a Krylov subspace? To achieve this, we use an orthogonal projection operator and a preconditioner that acts as a ``smoother" (see \cref{sec:preconditioning}).
\end{problem}
\item \begin{problem}\label{problem3} Since $\mathcal{L}$ is an unbounded operator, how does one construct a Krylov method that rapidly converges to the solution of~\cref{eq:PDE}? Our answer is to use operator preconditioners that allow for our Krylov iterations to be terminated after a finite number of iterations with an approximate solution (see \cref{sec:convergence}).
\end{problem}
\end{description}

The standard discretization-based approach is problematic for spectral methods. Despite $n\times n$ Chebyshev-based spectral discretization matrices of~\cref{eq:PDE} having a fast $\mathcal{O}(n\log n)$ matrix-vector product based on the FFT~\cite{mason2002chebyshev}, Krylov methods are not ubiquitously employed in the spectral method community~\cite{fornberg1998practical,trefethen2000spectral}. Since matrices derived from spectral methods are more ill-conditioned than expected (see \cref{fig:KrylovSolvers}), good low-order preconditioners can be difficult to design~\cite{orszag1980spectral}, especially for BVPs with variable coefficients~\cite{canuto1985preconditioned}.
Additionally, spectral discretizations based on Chebyshev polynomials are typically not structure-preserving, which can prohibit the use of the matrix CG method even when the underlying differential operator is self-adjoint with $a(x)>0$ and $c(x)\geq 0$.

The Krylov methods that we develop solve~\cref{eq:PDE} by directly applying $\mathcal{L}$ to functions, and we prove that the iterates from our preconditioned CG method geometrically converge to the solution (see \cref{cor:CGconvergence}). Our operator Krylov methods are not equivalent to matrix Krylov methods applied to a standard discretization of~\cref{eq:PDE}, and offer several advantages: (1) Operator preconditioners are motivated by the differential operator as opposed to the properties of a discretization scheme, (2) The resulting CG method can always be applied to~\cref{eq:PDE} with $a(x)>0$, $b(x)=0$, and $c(x)\geq 0$ without the need for structure-preserving discretizations~\cite[Chap.~4]{shen2011spectral}, and (3) The iterates converge to the desired solution of~\cref{eq:PDE}, as opposed to the solution of a discretization.

\begin{figure}
\centering
\begin{minipage}{.49\textwidth}
\begin{overpic}[width=\textwidth]{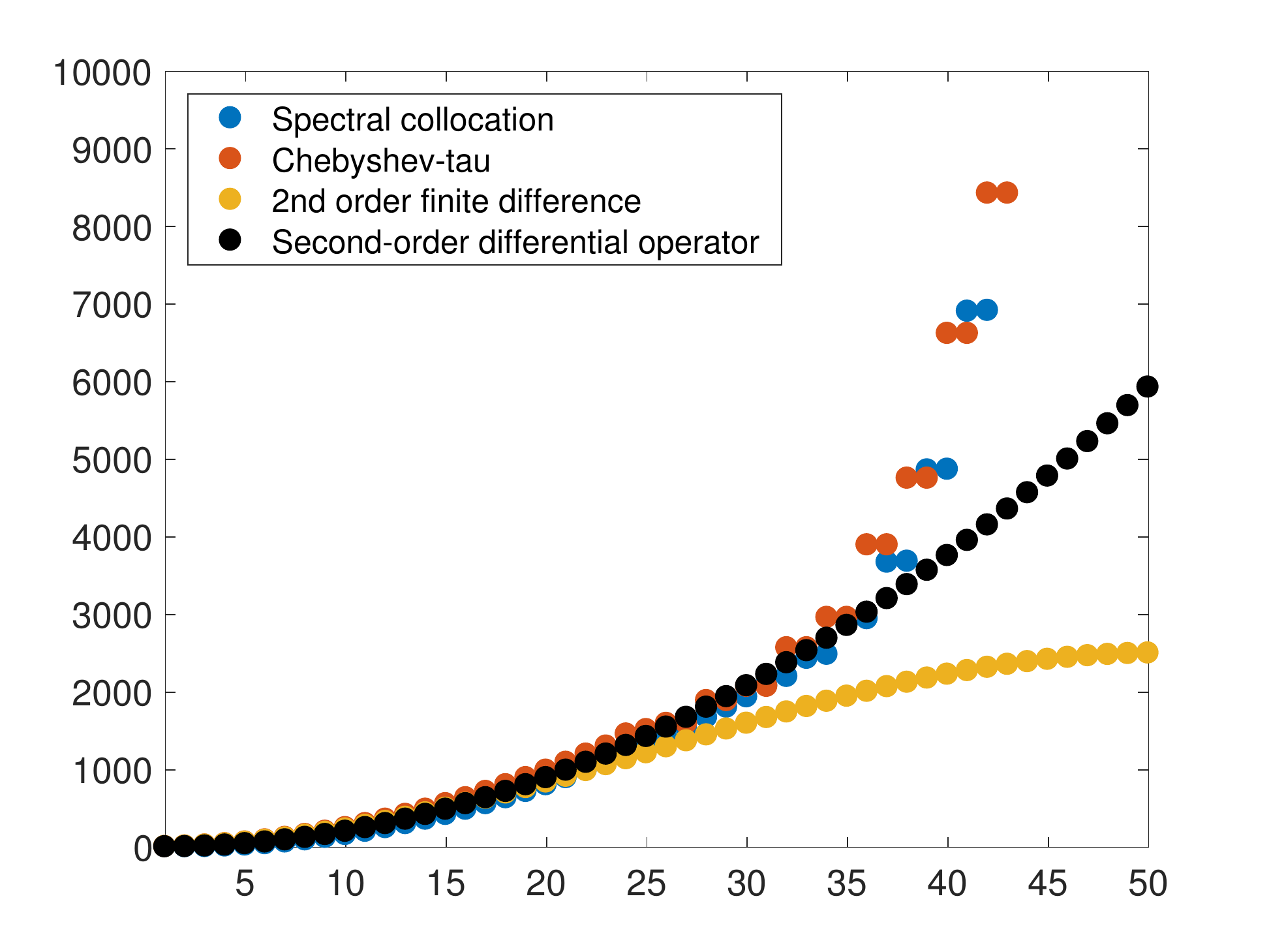}
\put(93,60) {\vector(0,1){10}}
\put(94,45) {\rotatebox{90}{more eigvals}}
\put(50,0) {$k$}
\put(0,35) {\rotatebox{90}{$|\lambda_k|$}}
\end{overpic}
\end{minipage}
\caption{Spectra of $50\times 50$ discretizations of $\mathcal{L} u = -u''$ with zero Dirichlet boundary conditions. Similar figures appear in~\cite[Fig.~1]{weideman1988eigenvalues}. Spectral collocation (blue dots)~\cite{trefethen2000spectral}, and Chebyshev tau (red dots)~\cite{ortiz1969tau} discretizations typically have spectra that grow asymptotically faster than the spectra of the underlying differential operator (black dots), while the spectra of finite difference (yellow dots)~\cite{leveque2007finite} discretizations grow asymptotically slower. Most popular spectral discretizations are more ill-conditioned than expected, leading to poor convergence properties of Krylov subspace solvers.  Our operator Krylov methods avoid discretizing BVPs and employs preconditioners that are motivated from the differential operator (see \cref{sec:preconditioning}).}
\label{fig:KrylovSolvers}
\end{figure}

Several attempts to develop operator Krylov methods for differential equations have been proposed that we believe date back to 1967~\cite{daniel1967conjugate}, where it was shown that an operator CG method can be reduced to a sequence of Poisson problems with Dirichlet boundary conditions. In 2009, a promising differential GMRES method for computing oscillatory integrals~\cite{olver2009gmres} was developed in the context of spectral methods, but it has remained unclear how to successfully incorporate boundary conditions.  A theoretical foundation for a CG method on ordinary and partial differential operators~\cite{malek2014preconditioning} was introduced in 2015. The authors use a Riesz map $\tau: \mathcal{H}^{-1}(\Omega)\rightarrow \Honezero$ to precondition a differential operator~\cite[Chap.~4]{malek2014preconditioning} and successfully construct a Krylov subspace of the form ${\rm Span}\{\tau f, \tau \mathcal{L}\tau f, (\tau\mathcal{L})^2\tau f, \ldots,\}$. This work is an insightful theoretical framework and our paper expands on their contribution in order to develop a collection of practical Krylov methods for solving~\cref{eq:PDE}.

Though we do not discretize the differential operator itself, for our operator Krylov methods to be of practical interest, one must employ an approximation space for the solution and right-hand side (see \cref{sec:CG1dAlgorithm}). Unlike most BVP solvers, the approximation space can be all of $\Honezero$ or an infinite dimensional dense subspace of $\Honezero$. This allows one to implement highly adaptive Krylov subspace methods that automatically resolve the solution to machine precision (see \cref{sec:CG1dAlgorithm}).

Intuitively, our main idea is to modify the operator-function products with $\mathcal{L}$ while preserving the weak form of~\cref{eq:PDE}. That is, we respect the bilinear form~\cite[p.~316]{evans2010partial} associated with~\cref{eq:PDE}, i.e.,
\begin{equation}
\B[\phi,\psi] = \int_{-1}^1 a(x) \phi'(x)\psi'(x) + b(x)\phi'(x)\psi(x)+c(x)\phi(x)\psi(x)  dx, \qquad \phi,\psi\in \mathcal{H}_0^1(\Omega)
\label{eq:bilinearForm}
\end{equation}
as well as the weak form of the solution as $\B[u,\psi] = \<f,\psi\>$ for all $\psi\in\mathcal{H}^1_0(\Omega)$.  Here, and throughout the paper, we use $\<\cdot,\cdot\>$ to denote the standard $L^2$ inner-product and $\|\psi\|^2 = \<\psi,\psi\>$.

The paper is structured as follows. In \cref{sec:CGtheory} we derive an unpreconditioned and preconditioned CG method for solving~\cref{eq:PDE} when $\mL $ is a self-adjoint second-order differential operator with $a(x)>0$, $b(x) = 0$, and $c(x)\geq 0$. In \cref{sec:CG1dAlgorithm} we use our CG theory to develop practical iterative BVP solvers for~\cref{eq:PDE}. In \cref{sec:OtherKrylovBasedMethods}, we extend our CG method to operator analogues of MINRES and GMRES.

\section{The CG method for differential operators}\label{sec:CGtheory}
The CG method for matrices is an iterative algorithm for solving $Ax=b$, where $A$ is a symmetric positive definite matrix~\cite{hestenes1952methods}. It constructs iterates $x_0=0,x_1,x_2,\ldots,$ such that $x_k$ is the best approximate from $\mathcal{K}_k(A,b)$ as measured by the energy norm. That is,
\[
x_k = \argmin_{y \in \K_k(A,b) }\| x-y \|_A, \qquad \K_k(A,b) = {\rm Span}\left\{b,Ab,\ldots,A^{k-1}b\right\},
\]
where $\smash{\|y\|_A^2 = y^TAy}$ and $x = A^{-1}b$ is the exact solution.  The fact that $\|\cdot\|_A$ defines a norm is central to the development and analysis of the CG method for matrices~\cite[Sec.~5.6]{liesen2013krylov}.

Just like symmetric positive definite matrices, self-adjoint differential operators with $a(x)>0$ and $c(x)\geq 0$ have real positive eigenvalues and an orthogonal basis of eigenfunctions~\cite[Sec.~6.5, Thm.~1]{evans2010partial}. The analogue of the energy norm in this setting is $\|\phi\|_{\mathcal{L}}^2 = \B[\phi,\phi]$ for $\phi\in\Honezero$, where $\B$ is the bilinear form associated to $\mathcal{L}$ in~\cref{eq:bilinearForm}. The fact that $\|\cdot\|_{\mathcal{L}}$ defines a norm is equally important for the development and analysis of a CG method for~\cref{eq:PDE}.

If $p_0,p_1,\ldots,$ form a complete basis for $\mathcal{H}^1_0(\Omega)$ so that $\B[p_i,p_j] = 0$ for $i\neq j\geq 0$, then since $f\in \Ltwo$, we may formally write the solution to~\cref{eq:PDE} as
\[
u = \sum_{j=0}^\infty \frac{\<f,p_j\>}{\B[p_j,p_j]} p_j.
\]
Our CG method carefully constructs functions $p_0,p_1,\ldots,$ sequentially, such that $\B[p_i,p_j] = 0$ for $i\neq j$, in the hope that we may not need all of them to obtain a good approximation to $u$.

\subsection{The unpreconditioned CG method with a restricted right-hand side}\label{sec:RestrictedCG}
In order to tackle the first major issue highlighted in the introduction (see \cref{problem1}), we compose $\mL$ with a projection operator\footnote{The idea of composing a matrix with a projection operator to generate a Krylov subspace is also used for solving saddle-point problems~\cite{gould2014projected}.} to ensure that any solution from the constructed Krylov subspace satisfies the zero Dirichlet conditions of~\cref{eq:PDE}.

Let $\mV_0$ be an approximation space for the solution of~\cref{eq:PDE}. We wish to construct a projection onto $\mV_0$ and apply it after each operator-function product so that the constructed Krylov subspace is a subspace of $\mV_0$. We temporarily make the following assumptions:
\begin{description}
\item \begin{assumption}\label{A1} $\mV_0$ is a closed (potentially infinite-dimensional) subspace of $\Htwozero$, and
\end{assumption}
\item \begin{assumption} \label{A2} $f\in \mV_0 $. \end{assumption}
\end{description}
In \cref{sec:preconditioning}, we introduce a preconditioner that acts as a ``smoother" to eliminate the need for \cref{A1} and allows us to set $\mV_0 = \mathcal{H}^1_0(\Omega)$. We avoid~\cref{A2} by solving an ancillary problem (see \cref{sec:GeneralRHS}).

Proceeding under~\cref{A1,A2}, we define an orthogonal projection operator onto $\mV_0$ (because $\mV_0$ is a closed subspace of $\Ltwo$) as
\[
\Pi_{\mV_0} \phi = \argmin_{p\in\mV_{0}} \|\phi - p\|, \qquad \Pi_{\mV_0} : \Ltwo\rightarrow \mathcal{V}_{0}.
\]
We work with the modified operator $\smash{\Pi_{\mV_0}^*\mL\Pi_{\mV_0}}: \Ltwo \rightarrow \mV_{0}$, where $\smash{\Pi_{\mV_0}^*}: \Ltwo\rightarrow \mathcal{V}_{0}$ is the adjoint of $\Pi_{\mV_0}$ over the $L^2$ inner-product.  Since $\Pi_{\mV_0} $ is an orthogonal projection, it is self-adjoint, i.e., $ \Pi_{\mV_0}^* = \Pi_{\mV_0} $~\cite[Chap.~5]{rynne2000linear}. This is important as it implies that the range of $\smash{\PiLPi}$ is $\mV_0$, and that the operator $
\smash{\Pi_{\mV_0}^*\mL\Pi_{\mV_0}}$ is self-adjoint. Consequently, it is reasonable to imagine applying a CG method with $\smash{\Pi_{\mV_0}^*\mL\Pi_{\mV_0}}$.

The operator $\smash{\PiLPi}: \Ltwo \rightarrow \mV_{0}$ is well-defined since $\mV_0\subset \Htwozero$, and we are interested in Krylov subspaces of the form
\begin{equation}
\mathcal{K}_k(\Pi_{\mV_0}^* \mathcal{L}\Pi_{\mV_0},f) = {\rm Span}\!\left\{\! f,\PiLPi f,\ldots,(\PiLPi)^{k-1}f\!\right\},\qquad k\geq 1.
\label{eq:NaiveKrylov2}
\end{equation}
Since $f\in\mV_{0}$, we know that $\mathcal{K}_k(\smash{\Pi_{\mV_0}^* \mathcal{L}\Pi_{\mV_0}},f)\subseteq \mV_{0}$ so that the boundary conditions are successfully incorporated into the Krylov subspace. Therefore, any iterative method that constructs iterates from the Krylov subspace in~\cref{eq:NaiveKrylov2} automatically imposes zero Dirichlet boundary conditions.

An unpreconditioned CG method can now be derived that generates iterates $u_0 = 0, u_1,u_2,\ldots,$ such that
\[
u_{k} = \argmin_{v\in\mathcal{K}_k(\Pi_{\mV_0}^* \mathcal{L}\Pi_{\mV_0},f)}\| u - v\|_{\mathcal{L}},
\]
where $u$ is the exact solution to~\cref{eq:PDE}\footnote{ This follows from the fact that the discretization error is $\mathcal{B}$-orthogonal to the algebraic error in a Galerkin method~\cite[Thm.~2.5.2]{liesen2013krylov}.}. The derivation of this method follows almost immediately from the CG method for matrices~\cite[Alg.~38.1]{trefethen1997numerical}, where in the derivation terms of the form $x^TAy$ are replaced by $\B[\phi,\psi]$, $x^Ty$ by $\<\phi,\psi\>$, and $Ax$ by $\PiLPi \phi$. The resulting unpreconditioned CG method for~\cref{eq:PDE} is given in~\cref{alg:CGBVP}. We also give the matrix CG method in~\cref{alg:CGmatrix} for comparison, and we emphasize that the two algorithms are essentially the same except the operations~\cref{alg:CGmatrix} are with vectors and matrices while the operations in~\cref{alg:CGBVP} are with functions and operators.

\vspace{-.2cm}

\begin{flushleft}
\begin{minipage}[t]{.47\textwidth}
\null
\begin{algorithm}[H]
\caption{The CG method for solving $Ax=b$, where $A\in\mathbb{R}^{n\times n}$ is a symmetric positive definite matrix and $b\in\mathbb{R}^{n\times 1}$.\vspace{0.05cm}
}\label{alg:CGmatrix}
\begin{algorithmic}[1]
\STATE{ Set $x_0=0$, $r_0=b$, and $p_0=b$}
\FOR{$k\! = \!0, 1,\ldots$ (until converged)}
\STATE{$\alpha_k= r_k^Tr_k \big/ (p_k^TAp_k)$}
\STATE{$x_{k+1}=x_k+\alpha_k p_k$}
\STATE{$r_{k+1}=r_k-\alpha_kAp_k$}
\STATE{$\beta_k= r_{k+1}^Tr_{k+1} \big/  r_k^Tr_k$}
\STATE{$p_{k+1}=r_{k+1}+\beta_k p_k$}
\ENDFOR
\end{algorithmic}
\end{algorithm}
\end{minipage}
\hspace{0.5cm}
\begin{minipage}[t]{.47\textwidth}
  \vspace{0pt}
\begin{algorithm}[H]
\caption{The CG method for~\cref{eq:PDE}, where $\smash{\mathcal{L}}$ is self-adjoint with $a(x)>0$ and $c(x)\geq 0$, and $\smash{f\in\mV_{0}}$. \vspace{0.45cm}}\label{alg:CGBVP}
\begin{algorithmic}[1]
\STATE{ Set $u_0=0$, $r_0=f$, and $p_0=f$}
\FOR{$k\!=\!0,1,\ldots$ (until converged)}
\STATE{$\alpha_k=\< r_k, r_k \> \big/ \B[p_k,p_k]$}
\STATE{$u_{k+1}=u_k+\alpha_k p_k$}
\STATE{$r_{k+1}=r_k-\alpha_k\PiLPi p_k$}
\STATE{$\beta_k=\< r_{k+1},r_{k+1}\>\big/\< r_k,r_k\>$}
\STATE{$p_{k+1}=r_{k+1}+\beta_k p_k$}
\ENDFOR
\end{algorithmic}
\end{algorithm}
\end{minipage}
\end{flushleft}

\vspace{.5cm}

For~\cref{alg:CGBVP} to be well-defined we must check that: (1) $r_0,r_1,\ldots,$ are in $\Ltwo$ so that $\<r_k,r_k\>$ is valid, (2) $p_0,p_1,\ldots,$ are in $\Ltwo$ so that $\PiLPi p_k$ is well-defined, and (3) $p_0,p_1,\ldots,$ are in $\mathcal{H}^1_0(\Omega)$ so that $\mathcal{B}[p_k,p_k]$ is valid. All these statements hold when $f\in \mathcal{V}_0\subset \Htwozero$ and can be verified by mathematical induction.

The CG method in~\cref{alg:CGBVP} is theoretically justified for any $\mV_0$ that is a closed subspace of $\Htwozero$. In particular, this includes the space $\mV_0 = \{v\in \mathcal{P}_n : v(\pm 1) = 0\}$ for some integer $n$, where $\mathcal{P}_n$ is the space of polynomials of degree $\leq n$. Furthermore, if the basis for $\mathcal{P}_n$ is selected to be the Legendre polynomials, then the CG method in~\cref{alg:CGBVP} is closely related to applying the CG method to a Legendre--Galerkin discretization of~\cref{eq:PDE}~\cite[Sec.~4.1]{shen2011spectral}. The advantage of~\cref{alg:CGBVP} is that it provides important insights into how to derive a preconditioned CG method (see~\cref{sec:precondCG}).

The convergence of the unpreconditioned CG method in~\cref{alg:CGBVP} is generically poor (see~\cref{fig:unpreconditioned}). The unboundedness of the differential operator means that $k={\rm dim}(\mV_{0})$ iterations are typically necessary (see~\cref{fig:unpreconditioned}) and, in our setting, $\mV_0$ could potentially be an infinite dimensional subspace.

\begin{figure}
\centering
\begin{minipage}{.49\textwidth}
\begin{overpic}[width=\textwidth]{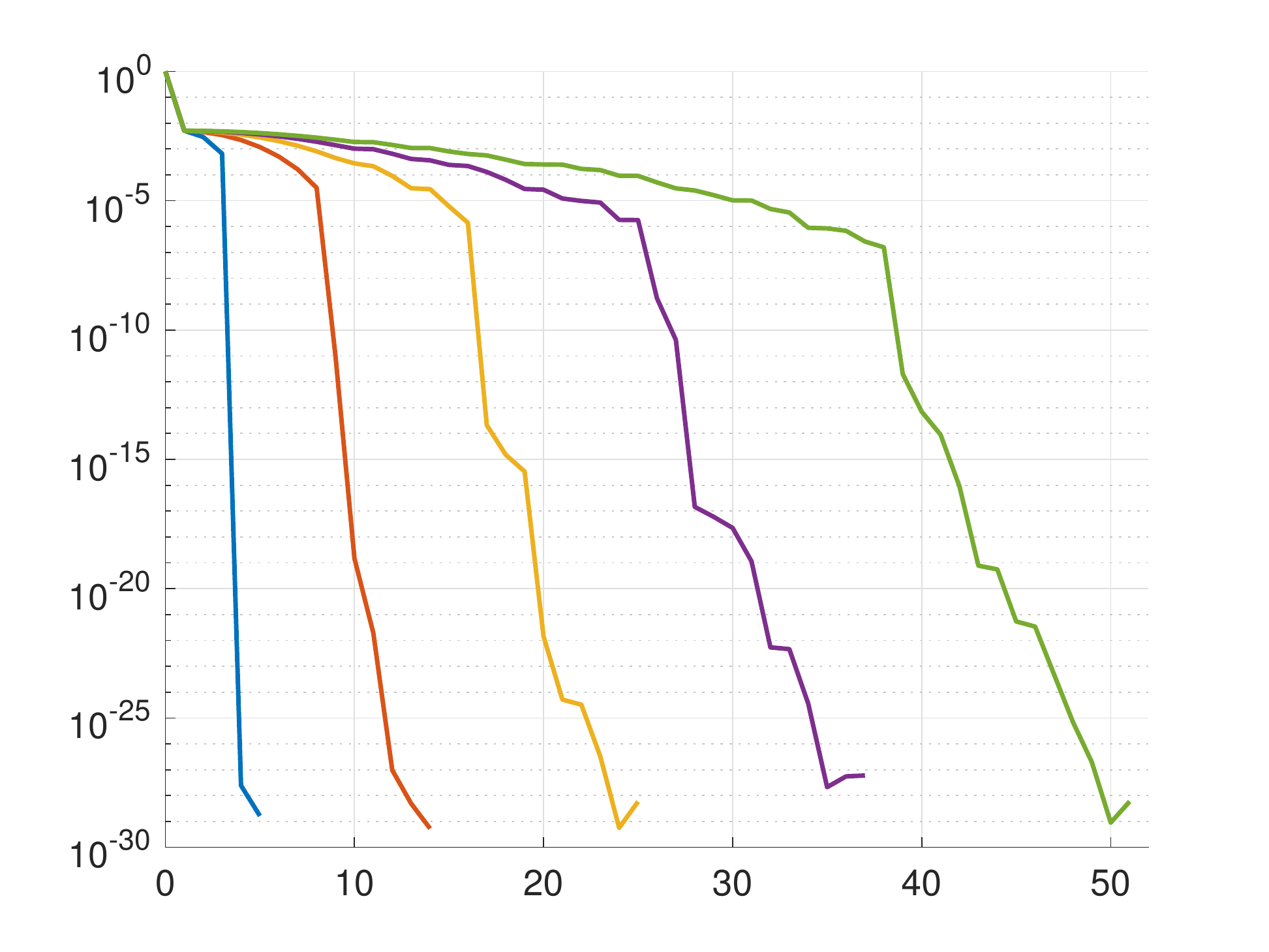}
\put(38,-2) {$k$, iteration}
\put(18,72) {$-u''(x) = 1-x^2, u(\pm 1)=0$}
\put(-3,20) {\rotatebox{90}{$\| u - u_k\|_{\mathcal{L}}/\|u\|_{\mathcal{L}}$}}
\put(20,40) {\rotatebox{-89}{$n = 10$}}
\put(29,40) {\rotatebox{-83}{$n = 20$}}
\put(42,40) {\rotatebox{-80}{$n = 30$}}
\put(58,40) {\rotatebox{-75}{$n = 40$}}
\put(77,40) {\rotatebox{-70}{$n = 50$}}
\end{overpic}
\end{minipage}
\begin{minipage}{.49\textwidth}
\begin{overpic}[width=\textwidth]{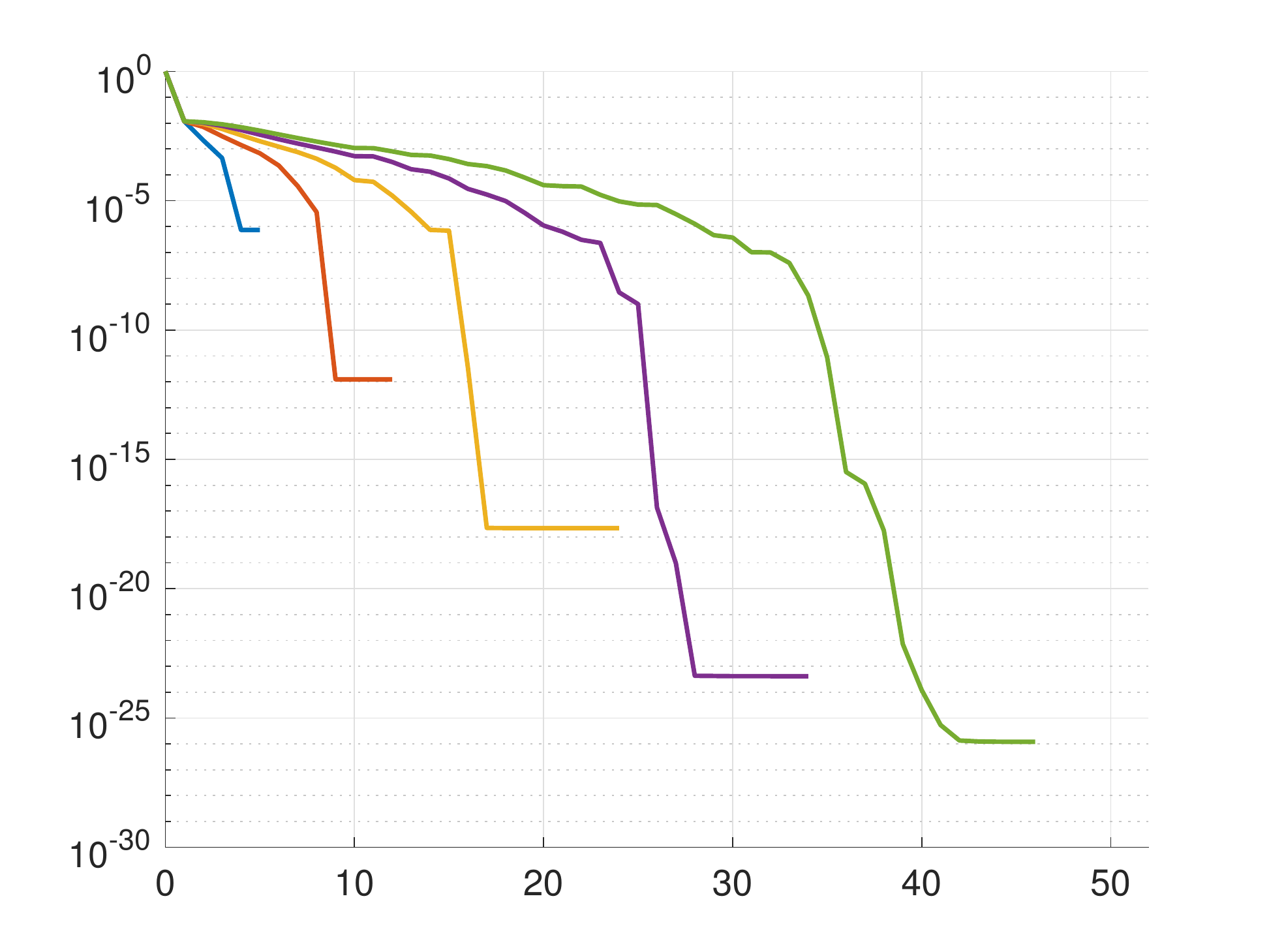}
\put(38,-2) {$k$, iteration}
\put(-3,20) {\rotatebox{90}{$\| u - u_k\|_{\mathcal{L}}/\|u\|_{\mathcal{L}}$}}
\put(0,72) {$-((2+\cos(\pi x))u')' = 1-x^2, u(\pm 1)=0$}
\put(20,60) {\rotatebox{-85}{$n = 10$}}
\put(27,55) {\rotatebox{-85}{$n = 20$}}
\put(38,50) {\rotatebox{-85}{$n = 30$}}
\put(53,45) {\rotatebox{-82}{$n = 40$}}
\put(69,40) {\rotatebox{-70}{$n = 50$}}
\end{overpic}
\end{minipage}
\caption{Convergence of the unpreconditioned CG method when $\mV_0 = \{v\in \mathcal{P}_n : v(\pm 1) = 0\}$ and $10\leq n\leq 50$, where $\mathcal{P}_n$ is the space of polynomials of degree $\leq n$. Left: The CG error when solving $\smash{-u'' = 1-x^2}$ on $(-1,1)$ and $u(\pm1)=0$.  Right: The CG error when solving $\smash{-((2+\cos(\pi x))u')' = 1-x^2}$ on  $(-1,1)$ and $u(\pm1)=0$. The unpreconditioned CG method here is rarely useful because differential operators are unbounded and the number of required CG iterations is generically ${\rm dim}(\mV_{0})$. To overcome this, we develop operator preconditioners (see \cref{sec:preconditioning}).}
\label{fig:unpreconditioned}
\end{figure}

\subsection{Operator preconditioning} \label{sec:preconditioning}
Improving the convergence of~\cref{alg:CGBVP} requires the development of preconditioners.
The preconditioned CG method for solving $Ax=b$ is equivalent to applying the CG method to $R^TARy = R^Tb$, where $x = Ry$ and $R$ is a square matrix~\cite[Sec.~8.1]{meurant2006lanczos}. Motivated by this, we consider solving
\begin{equation}
\mR^* \mathcal{L} \mR v  = \mR^*f\quad \text{ on }\Omega=(-1,1), \qquad (\mR v)(\pm 1) = 0,
\label{eq:preconditionedPDE}
\end{equation}
where $\mR:\Ltwo\rightarrow \Ltwo$ is a linear operator and $\mR^*$ is the adjoint of $\mR$, i.e., $\langle \mR^*\phi,\psi\rangle = \langle \phi,\mR \psi\rangle$ for all $\phi,\psi\in \Ltwo$. We call $\mR$ an {\em operator preconditioner}.\footnote{In the Petrov--Galerkin literature, the concept of ``operator preconditioning" is similar and refers to a recipe for constructing preconditioners so that they are robust with respect to the choice of trial and test basis~\cite{hiptmair2006operator}. A related concept is ``equivalent preconditioners", where one constructs a preconditioner by simplifying the given differential operator~\cite{axelsson2009equivalent}.}

We make the following requirements on the operator preconditioner $\mR:\Ltwo\rightarrow \Ltwo$, which appear to be necessary in our framework to overcome the remaining two major issues highlighted in the introduction (see \cref{problem2,problem3}):
\begin{description}[leftmargin=*,noitemsep]
\item[{\bf Bounded:}] The preconditioner $\mR:\Ltwo\rightarrow \Ltwo$ is a bounded linear operator. That is,  $\|\mR\|_{\text{op}} = \sup_{\phi\in \Ltwo, \|\phi\| = 1} \|\mR \phi\| < \infty$.
\item[{\bf Smoother:}] The preconditioner and its adjoint over $\Ltwo$ are smoothers, i.e., $\mR:\Ltwo \rightarrow \Hone$,  $\mR:\Hone \rightarrow \Htwo$, $\mR^*:\Ltwo \rightarrow \Hone$, and $\mR^*:\Hone \rightarrow \Htwo$.\footnote{Note that if $f \in L^2( \Omega)$ this implies that $\mR^*f \in \Hone$ and $\mR^* \mL \mR: \Hone \rightarrow \Hone $.}
\item[{\bf Preconditioner for the Laplacian:}] There are constants $0<\gamma_0\leq \gamma_1<\infty$ such that $\gamma_0\|\phi\|^2\leq \| (\mR \phi)'\|^2 \leq \gamma_1\|\phi\|^2$ for all $\phi\in \Ltwo$.
\end{description}
A natural operator preconditioner for~\cref{eq:PDE}, and our canonical choice, is the indefinite integration operator $\mR:\Ltwo \rightarrow \Ltwo$, defined as
\begin{equation}
(\mR \phi)(x) = \int_{-1}^x \phi(s) ds, \qquad (\mR^*\phi)(x) = \int_{x}^1 \phi(s) ds, \qquad \phi\in\Ltwo.
\label{eq:IntegralPreconditioner}
\end{equation}
The preconditioner and its adjoint act as ``smoothers" and $\|\mR\|_{\text{op}} = 4/\pi<\infty$~\cite[Prob.~188]{halmos2012hilbert}.  If $\mathcal{L}u = -u''$, then the associated bilinear form of the operator $\mR^*\mathcal{L}\mR$ is
\[
\mathcal{B}[\mR \phi,\mR \psi] = \!\!\int_{-1}^1 \!\!\left(\int_{-1}^x\!\! \phi(s) ds\right)'\!\! \left(\int_{-1}^x \!\!\psi(s) ds\right)' \!\!dx  =\!\! \int_{-1}^1 \!\!\phi(x)\psi(x) dx = \<\phi,\psi\>,
\]
where $\phi,\psi\in \Ltwo$. Therefore, $\mR$ is a preconditioner for the Laplacian with $\gamma_0 = \gamma_1 = 1$ so that the $\mathcal{R}$ in~\cref{eq:IntegralPreconditioner} satisfies all of our requirements.

The integral preconditioner in~\cref{eq:IntegralPreconditioner} appears throughout the literature and is exploited to construct preconditioners for finite element discretizations~\cite{kirby2010functional} as well as for spectral Galerkin discretizations~\cite[Chap.~4]{canuto2010spectral}.

\subsection{The preconditioned CG method}\label{sec:precondCG}
With an operator preconditioner in hand, we are able to derive a satisfying operator CG method. In order for $\Honezero$ to be the solution space for $u = \mR v$ in~\cref{eq:preconditionedPDE}, the space $\mW_0 = \{\phi\in \Ltwo : \mR \phi \in \Honezero\}$ must be the approximation space for $v$ in~\cref{eq:preconditionedPDE}. Moreover, instead of assuming that $f\in \mV_0$, we must now work under the the following assumption:
\begin{description}
\item \begin{assumption} \label{A2p} $\mR^*f\in \mW_0$. \end{assumption}
\end{description}
We no longer need~\cref{A1} and we remove~\cref{A2p} in \cref{sec:GeneralRHS}.
Since we are using a preconditioner and the approximation space for the solution of~\cref{eq:preconditionedPDE} is $\mW_0$, we first need to design an orthogonal projection operator $\Pi_{\mW_0} : \Ltwo \rightarrow \mW_0$. This task appears challenging for general preconditioners $\mathcal{R} $.
However, when $\mathcal{R}$ is the indefinite integral preconditioner in~\cref{eq:IntegralPreconditioner}, we note that $\mW_0$ is the space of $\Ltwo$ functions with zero mean. Moreover, $(\mathcal{R}\phi)(-1) = 0$ for all $\phi \in \Ltwo$, and hence we find that the orthogonal projection $\Pi_{\mW_0}:\Ltwo \to \mW_0$ is given by
\[
\Pi_{\mW_0}\phi = \phi - \frac{1}{2}\int_{-1}^1 \phi(s) ds.
\]
It is easy to verify that this projection operator is self-adjoint, and thus orthogonal:
\[
\<\Pi_{\mW_0}\phi,\psi\> = \<\phi,\psi\> - \frac{1}{2}\int_{-1}^1 \phi(s)ds\int_{-1}^1 \psi(s)ds = \<\phi, \Pi_{\mW_0}\psi\>, \qquad \phi,\psi\in \Ltwo.
\]

Given an orthogonal projection operator $\Pi_{\mW_0}:\Ltwo\to \mW_0$, we can derive a preconditioned CG method that constructs iterates $v_0 = 0, v_1,v_2,\ldots,$ so that $u_k = \mR v_k$ approximates the solution to~\cref{eq:PDE}. The Krylov subspace of interest is now
\begin{equation}
\mathcal{K}_k(\mathcal{T}, \mR^*f) = {\rm Span}\left\{\mR^*f, \mathcal{T}(\mR^*f),\ldots,\mathcal{T}^{k-1}\mR^*f\right\}, \qquad \mathcal{T} = \Pi_{\mathcal{W}_0}^*\mR^*\mathcal{L}\mR\Pi_{\mathcal{W}_0},
\label{eq:preconditionedKrylov}
\end{equation}
where $\mathcal{K}_k(\mathcal{T},\mR^*f)\subset \mathcal{W}_0$ because \cref{A2p} ensures that $\mR^*f\in \mW_0$.  The associated preconditioned CG method is given in~\cref{alg:CGBVP2}.

\begin{algorithm}[H]
\caption{The preconditioned CG method for~\cref{eq:PDE}, where $\smash{\mathcal{L}}$ is self-adjoint with $a(x)>0$ and $c(x)\geq 0$, and $\smash{\mR^*f\in\mW_{0}}$.}\label{alg:CGBVP2}
\begin{algorithmic}[1]
\STATE{ Set $v_0=0$, $r_0=\mR^*f$, and $p_0=\mR^*f$}
\FOR{$k =0,1,\ldots,$ (until converged)}
\STATE{$\alpha_k=\< r_k, r_k \> \big/ \B[\mR p_k,\mR p_k]$}
\STATE{$v_{k+1}=v_k+\alpha_k p_k$}
\STATE{$r_{k+1}=r_k-\alpha_k\mathcal{T} p_k$}
\STATE{$\beta_k=\< r_{k+1},r_{k+1}\>\big/\< r_k,r_k\>$}
\STATE{$p_{k+1}=r_{k+1}+\beta_k p_k$}
\STATE{$u_{k+1} = \mR v_{k+1}$}
\ENDFOR
\end{algorithmic}
\end{algorithm}
To verify that~\cref{alg:CGBVP2} is well-defined we check that: (1) $r_0,r_1,\ldots,$ are in $\Ltwo$ so that $\<r_k,r_k\>$ is valid, (2) $p_0,p_1,\ldots,$ are in $\Ltwo$ so that $\mathcal{T} p_k$ and $\mathcal{B}[\mR p_k,\mR p_k]$ are valid operations. All these statements hold when $\mR^*f\in \mathcal{W}_0$ and can be proved by mathematical induction.

The preconditioned CG method in~\cref{alg:CGBVP2} immediately inherits many of the theoretical properties from the CG method for matrices~\cite{meurant2006lanczos}. Here are two immediate facts that are analogous to familiar results for the matrix CG method:
\begin{lemma}
The functions $r_0,r_1,\ldots,$ in~\cref{alg:CGBVP2} satisfy $\<r_i,r_j\>=0$ for $i\neq j$. Moreover, the functions $p_0,p_1,\ldots,$ satisfy $\B[\mR p_i,\mR p_j]=0$ for $i\neq j$.
\label{lem:orthogonality}
\end{lemma}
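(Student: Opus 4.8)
The plan is to transplant the classical orthogonality proof for the matrix CG method into the function-space setting. The point is that \cref{alg:CGBVP2} is, line for line, \cref{alg:CGmatrix} under the substitutions $A\mapsto\mathcal{T}$, $x^Ty\mapsto\<x,y\>$, and $x^TAy\mapsto\B[\mR x,\mR y]$, so the first step is to record the two structural identities that make this substitution legitimate. (i) $\mathcal{T}=\Pi_{\mW_0}^*\mR^*\mL\mR\Pi_{\mW_0}$ is self-adjoint on $\Ltwo$, since $\Pi_{\mW_0}$ is self-adjoint (established in \cref{sec:precondCG}) and $\mL$ is self-adjoint because $b(x)=0$. (ii) For all $p,q\in\mW_0$ one has $\<\mathcal{T}p,q\>=\B[\mR p,\mR q]$: because $\Pi_{\mW_0}p=p$ and $\Pi_{\mW_0}q=q$, the left-hand side collapses to $\<\mR^*\mL\mR p,q\>=\<\mL\mR p,\mR q\>$, and this equals $\B[\mR p,\mR q]$ since $\mR p,\mR q\in\Honezero$ (as $p,q\in\mW_0$) and $\B$ is the bilinear form of $\mL$. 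One also notes, as in \cref{sec:preconditioning}, that $\B[\mR\cdot,\mR\cdot]$ is positive definite on $\mW_0$ (from $a(x)>0$ and $c(x)\geq 0$ together with the ``preconditioner for the Laplacian'' lower bound), so that $\B[\mR\cdot,\mR\cdot]$-orthogonality is meaningful and $\alpha_k$ is well-defined whenever $r_k\neq 0$.

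The second step is the induction, which then proceeds exactly as in the finite-dimensional case. One proves by induction on $m$ the joint statement
\[
{\rm Span}\{r_0,\ldots,r_m\}={\rm Span}\{p_0,\ldots,p_m\}=\K_{m+1}(\mathcal{T},\mR^*f), \qquad \<r_i,r_j\>=0 \ \text{ and } \ \B[\mR p_i,\mR p_j]=0 \ \text{ for } i\neq j\leq m,
\]
working under the harmless assumption that the iteration has not already terminated (if $r_j=0$ for some $j$ the assertion is vacuous for indices beyond $j$). The inductive step uses nothing but the three recurrences $r_{m+1}=r_m-\alpha_m\mathcal{T}p_m$, $p_{m+1}=r_{m+1}+\beta_m p_m$, and $r_m=p_m-\beta_{m-1}p_{m-1}$, the definitions of $\alpha_m$ and $\beta_m$, identity (ii) to convert each $\<\mathcal{T}p_i,p_j\>$ into $\B[\mR p_i,\mR p_j]$, identity (i) to move $\mathcal{T}$ across an inner product, and the relation $\mathcal{T}p_j=(r_j-r_{j+1})/\alpha_j$ (valid since $\alpha_j\neq 0$), which is what feeds residual orthogonality back into the $\B$-orthogonality computation. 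The span identities supply $r_j\in{\rm Span}\{p_0,\ldots,p_j\}$ and $\mathcal{T}p_j\in{\rm Span}\{r_0,\ldots,r_{j+1}\}$, which is all one needs to close the induction; no step differs from the classical computation in \cite[Thm.~38.1]{trefethen1997numerical} or \cite[Sec.~5.6]{liesen2013krylov}.

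The only point requiring care beyond ``copy the matrix proof'', and the step I expect to be the main obstacle, is checking that every object appearing in the argument genuinely lies in the space where identity (ii) applies, namely that $r_k,p_k\in\mW_0$ for all $k$ (so that $\mR p_k\in\Honezero$, $\B[\mR p_k,\mR p_k]$ is finite, and $\mathcal{T}p_k\in\mW_0$ so the recurrences stay in $\mW_0$). This is precisely the well-definedness induction already noted after \cref{alg:CGBVP2}: $r_0=p_0=\mR^*f\in\mW_0$ by \cref{A2p}, $\mathcal{T}$ maps into $\mW_0$ by construction, and $\mW_0$ is a linear subspace, so $r_{k+1}$ and $p_{k+1}$ remain in $\mW_0$. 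With this invariance and identities (i)--(ii) in hand, the transplanted proof goes through verbatim, so there is no genuinely new estimate to establish: the content of the lemma is entirely that the operator CG recurrences reproduce the algebraic structure of the matrix ones.
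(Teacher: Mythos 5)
Your proposal is correct and follows essentially the same route as the paper: the paper's proof simply notes that $\alpha_k$ and $\beta_k$ are chosen to enforce consecutive orthogonality ($\<r_{k+1},r_k\>=0$ and $\B[\mR p_{k+1},\mR p_k]=0$) and asserts that full pairwise orthogonality ``immediately follows'' by the classical matrix CG induction, which is exactly the induction you spell out. Your version is more explicit about the two structural identities (self-adjointness of $\mathcal{T}$ and $\<\mathcal{T}p,q\>=\B[\mR p,\mR q]$ on $\mW_0$) and the invariance $r_k,p_k\in\mW_0$, but these are the same ingredients the paper relies on implicitly via its well-definedness remark following \cref{alg:CGBVP2}.
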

\begin{proof}
The constant $\alpha_k$ is selected so that $\<r_{k+1},r_k\>=0$ for $k\geq 0$. This gives the formula $\alpha_k = \<r_k,r_k\>/\B[\mR r_k,\mR p_k]$, which can be simplified to the formula in~\cref{alg:CGBVP2} since $r_{k+1} = p_{k+1} -  \beta_{k}p_k$.  The constant $\beta_k$ is selected so that $\B[\mR p_{k+1},\mR p_k]=0$ for $k\geq 0$. This gives the formula $\beta_k = -\B[\mR r_{k+1},\mR p_k]/\B[\mR p_k,\mR p_k]$, which can be simplified to the formula in~\cref{alg:CGBVP2} since $r_{k+1} = r_k - \alpha_k \mathcal{T}p_k$. The result immediately follows.
\end{proof}

\Cref{lem:orthogonality} also shows that~\cref{alg:CGBVP2} is solving a best approximation problem.
\begin{theorem}
Let $u_0=0,u_1,\ldots,$ be the CG iterates from~\cref{alg:CGBVP2} and $u$ the solution to~\cref{eq:PDE}. Then,
\[
u_k = \argmin_{p\in\mathcal{X}_k} \| u-p\|_{\mathcal{L}}, \qquad k\geq 1,
\]
where $\mathcal{X}_k = \{p\in\Honezero : p = \mR q, q\in \K_k(\mathcal{T},\mR^*f) \}$.
\label{thm:best}
\end{theorem}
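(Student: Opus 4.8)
The plan is to run the classical argument that identifies the CG iterate with a Galerkin projection and then invoke the projection theorem in the inner-product space $(\Honezero,\B[\cdot,\cdot])$, whose induced norm is $\|\cdot\|_{\mathcal{L}}$. First I would record the standard structural consequences of \cref{alg:CGBVP2}, all obtained by induction on the update formulas together with \cref{lem:orthogonality}: that $v_k,p_k,r_k\in\mW_0$ for every $k$; that the algorithmic residual satisfies $r_k=\mR^*f-\mathcal{T}v_k$; and that ${\rm Span}\{p_0,\dots,p_{k-1}\}={\rm Span}\{r_0,\dots,r_{k-1}\}=\K_k(\mathcal{T},\mR^*f)$. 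Combined with the $L^2$-orthogonality $\langle r_i,r_j\rangle=0$ ($i\neq j$) from \cref{lem:orthogonality}, the last equality gives $\langle r_k,p_j\rangle=0$ for $0\le j<k$; it also shows that $\mathcal{X}_k={\rm Span}\{\mR p_0,\dots,\mR p_{k-1}\}$ is finite-dimensional and that $u_k=\mR v_k=\sum_{j=0}^{k-1}\alpha_j\mR p_j\in\mathcal{X}_k$.

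The crux is the identity $\langle r_k,p_j\rangle=\B[u-u_k,\mR p_j]$ for $0\le j\le k$. To prove it I would start from $r_k=\mR^*f-\mathcal{T}v_k$ with $\mathcal{T}=\Pi_{\mW_0}^*\mR^*\mL\mR\Pi_{\mW_0}$. Since $\Pi_{\mW_0}$ is self-adjoint and $v_k,p_j\in\mW_0$, both projections drop out: $\langle\mathcal{T}v_k,p_j\rangle=\langle\mR^*\mL\mR v_k,p_j\rangle=\langle\mL\mR v_k,\mR p_j\rangle$. The smoother requirement forces $\mR v_k\in\Htwozero$ while $p_j\in\mW_0$ forces $\mR p_j\in\Honezero$, so integration by parts gives $\langle\mL\mR v_k,\mR p_j\rangle=\B[\mR v_k,\mR p_j]=\B[u_k,\mR p_j]$. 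Likewise $\langle\mR^*f,p_j\rangle=\langle f,\mR p_j\rangle=\B[u,\mR p_j]$ by the weak form $\B[u,\psi]=\langle f,\psi\rangle$ of \cref{eq:PDE}, which applies because $\mR p_j\in\Honezero$. Subtracting, $\langle r_k,p_j\rangle=\langle\mR^*f,p_j\rangle-\langle\mathcal{T}v_k,p_j\rangle=\B[u,\mR p_j]-\B[u_k,\mR p_j]=\B[u-u_k,\mR p_j]$, which is the identity.

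Restricting the identity to $0\le j<k$ and using $\langle r_k,p_j\rangle=0$ gives $\B[u-u_k,\mR p_j]=0$ for all such $j$; since the $\mR p_j$ span $\mathcal{X}_k$, this is exactly the Galerkin orthogonality $\B[u-u_k,p]=0$ for all $p\in\mathcal{X}_k$. Because $u_k\in\mathcal{X}_k$ and $\B[\cdot,\cdot]$ is an inner product on $\Honezero$ with induced norm $\|\cdot\|_{\mathcal{L}}$, the projection theorem applied to the finite-dimensional subspace $\mathcal{X}_k$ yields $u_k=\argmin_{p\in\mathcal{X}_k}\|u-p\|_{\mathcal{L}}$, which is the assertion.

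I expect the main obstacle to be the bookkeeping in the crux identity — commuting the projections $\Pi_{\mW_0}$ past the adjoint, and verifying the regularity ($\mR v_k\in\Htwozero$, $\mR p_j\in\Honezero$) that licenses replacing $\langle\mL\,\cdot,\cdot\rangle$ by $\B[\cdot,\cdot]$ and invoking the weak form; once these are in place the rest is the standard CG/Galerkin argument. One should also dispatch the degenerate case in which the iteration terminates, i.e.\ some $\B[\mR p_k,\mR p_k]=0$: the lower bound $\gamma_0>0$ in the third preconditioner requirement makes $\mR$ injective, so this forces $p_k=0$ and hence (via $\langle r_k,p_k\rangle=\langle r_k,r_k\rangle$) $r_k=0$, at which point $u_k=u$ and the claim is trivial from that index onward.
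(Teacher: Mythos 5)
Your proof is correct and takes essentially the same route as the paper's: \cref{lem:orthogonality} yields $\B$-orthogonality of the error $u-u_k$ to $\mathcal{X}_k$, and the projection theorem in the inner product inducing $\|\cdot\|_{\mL}$ finishes the argument. The only substantive difference is that you make explicit the bridge $\<r_k,p_j\> = \B[u-u_k,\mR p_j]$ (via the residual identity, the self-adjointness of $\Pi_{\mW_0}$, and the weak form of \cref{eq:PDE}), together with the regularity and breakdown checks, all of which the paper's one-line appeal to \cref{lem:orthogonality} leaves implicit.
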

\begin{proof}
From~\cref{lem:orthogonality}, we find that $\B[\mR (v-v_k),\mR p_j]=0$ for $j\geq k+1$, where $u = \mR v$. In other words, we have
\[
v_k = \argmin_{q\in\K_k(\mathcal{T},\mR^*f)} \| v-q\|_{\mathcal{T}}.
\]
Since $\| v-q\|_{\mathcal{T}}^2 = \B[\mR (v-q), \mR (v-q)] = \B[u -\mR q, u -\mR q] = \|u-\mR q\|_\mathcal{L}^2$, this is equivalent to $v_k = \argmin_{q\in\K_k(\mathcal{T},\mR^*f)} \|u-\mR q\|_\mathcal{L}$. Finally, we note that $u_k = \mR v_k$ and therefore, $u_k = \argmin_{p\in\mathcal{X}_k} \| u-p\|_{\mathcal{L}}$, where $\mathcal{X}_k = \{p\in\Honezero : p = \mR q, q\in \K_k(\mathcal{T},\mR^*f) \}$.
\end{proof}

\Cref{thm:best} shows that~\cref{alg:CGBVP2} is calculating the best approximation from $\mathcal{X}_k$ to $u$ in the $\|\cdot \|_\mathcal{L}$ norm and also guarantees that the error $e_k = \| u - u_k\|_{\mathcal{L}}$ is monotonically non-increasing, i.e.,
\[
\| u - u_{k+1}\|_{\mathcal{L}}\leq \| u - u_k\|_{\mathcal{L}}, \qquad k\geq 0.
\]

In practice, designing good preconditioners is paramount for an efficient BVP solver. One could imagine being confronted with the same dilemma as preconditioning the CG method for matrices. On the one hand, we want to select $\mR$ so that $\mR \phi$ can be computed efficiently for any $\phi\in\mathcal{W}_0$. On the other hand, we want $\mathcal{T}$ to be a well-conditioned operator over $\mathcal{W}_0$ (see~\cref{eq:ConditionNumber}). Here, we have an additional desire that is not present for matrices: we would like an efficient algorithm to compute $\Pi_{\mW_0}\psi$ for any $\psi\in \Ltwo$, where $\Pi_{\mW_0}:\Ltwo\to \mW_0$ is an orthogonal projection operator (see \cref{sec:CG1dAlgorithm}).  In this paper, we always select $\mathcal{R}$ to be the indefinite integral operator in~\cref{eq:IntegralPreconditioner}.

\subsection{Convergence theory for the preconditioned CG method}\label{sec:convergence}
In this section, we show that the preconditioned CG method converges at a geometric rate when the operator preconditioner is bounded, is a smoother, and is a preconditioner for the Laplacian (see \cref{sec:preconditioning}). The standard bound on the convergence of the CG method for $Ax=b$ involves the condition number of $A$~\cite[Chap.~2]{meurant2006lanczos}. Though this bound is not always descriptive, it is explicit and is the first canonical convergence result.  Similarly, the convergence of our operator CG method can be bounded using the condition number of the operator $\mR^*\mathcal{L}\mR$ from a restricted subspace of $\Ltwo$.

The condition number of $\mR^*\mathcal{L}\mR:\mW_0\to \Ltwo$ is given by~\cite{kirby2010functional}:
\begin{equation}
\kappa_{\mW_0}(\mR^*\mathcal{L}\mR) = \frac{\sup_{\phi\in \mW_0, \|\phi\|=1} \B[\mR \phi,\mR \phi]}{\inf_{\phi\in \mW_0, \|\phi\|=1} \B[\mR \phi,\mR \phi]},
\label{eq:ConditionNumber}
\end{equation}
where $\mW_0 = \left\{\phi\in\Ltwo: \mR\phi\in\Honezero\right\}$.
The following theorem bounds $\kappa_{\mW_0}(\mR^*\mathcal{L}\mR)$ and is used in~\cref{cor:CGconvergence} to derive a CG convergence bound.
\begin{theorem}
Let $\Omega=(-1,1)$, $a, c \in L^\infty(\Omega)$, $a(x)>0$ for $x\in\Omega$, $c(x)\geq 0$ for $x\in\Omega$, and $\mathcal{L}u = -(a(x)u'(x))' + c(x)u$ with bilinear form $\mathcal{B}:\Honezero\times \Honezero \rightarrow \mathbb{R}$. Given an operator preconditioner $\mR$ that is bounded, is a smoother, and is a preconditioner for the Laplacian (see \cref{sec:preconditioning}), the (restricted) condition number of $\mR^*\mathcal{L}\mR$ is bounded. Furthermore,
\[
\kappa_{\mW_0}(\mR^*\mathcal{L}\mR) \leq \frac{\gamma_1\|a\|_\infty + \|c\|_\infty \|\mR\|_{\text{op}}^2}{\gamma_0\inf_{x\in\Omega} |a(x)|},
\]
where $\mW_0 = \{\phi\in \Ltwo : \mR \phi \in \Honezero\}$ and $\|\mR\|_{\text{op}} = \sup_{\phi\in \mW_0, \|\phi\|=1} \|\mR \phi\|$.
\label{thm:ConditionNumber}
\end{theorem}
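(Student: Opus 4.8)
The plan is to bound the numerator and denominator in the definition~\eqref{eq:ConditionNumber} of $\kappa_{\mW_0}(\mR^*\mathcal{L}\mR)$ separately, using the explicit form of the bilinear form $\mathcal{B}$ together with the three assumed properties of the preconditioner. Fix $\phi\in\mW_0$ with $\|\phi\|=1$ and write $\psi = \mR\phi$; note $\psi\in\Honezero$ by definition of $\mW_0$. Since $b(x)=0$ here, the bilinear form reads $\mathcal{B}[\psi,\psi] = \int_{-1}^1 a(x)\,\psi'(x)^2\,dx + \int_{-1}^1 c(x)\,\psi(x)^2\,dx$, and both terms are nonnegative because $a(x)>0$ and $c(x)\ge 0$.

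For the numerator (upper bound): I would estimate $\int a\,\psi'^2 \le \|a\|_\infty\,\|\psi'\|^2 = \|a\|_\infty\,\|(\mR\phi)'\|^2 \le \gamma_1\|a\|_\infty\,\|\phi\|^2 = \gamma_1\|a\|_\infty$, invoking the upper Laplacian-preconditioner inequality $\|(\mR\phi)'\|^2\le\gamma_1\|\phi\|^2$. For the zeroth-order term, $\int c\,\psi^2 \le \|c\|_\infty\,\|\psi\|^2 = \|c\|_\infty\,\|\mR\phi\|^2 \le \|c\|_\infty\,\|\mR\|_{\text{op}}^2\,\|\phi\|^2 = \|c\|_\infty\,\|\mR\|_{\text{op}}^2$, using boundedness of $\mR$. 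Adding these gives $\sup_{\|\phi\|=1}\mathcal{B}[\mR\phi,\mR\phi] \le \gamma_1\|a\|_\infty + \|c\|_\infty\,\|\mR\|_{\text{op}}^2$, which is the numerator of the claimed bound. For the denominator (lower bound): drop the nonnegative $c$-term and estimate $\mathcal{B}[\mR\phi,\mR\phi] \ge \int a\,\psi'^2 \ge (\inf_{x\in\Omega}|a(x)|)\,\|(\mR\phi)'\|^2 \ge (\inf_{x\in\Omega}|a(x)|)\,\gamma_0\,\|\phi\|^2 = \gamma_0\inf_{x\in\Omega}|a(x)|$, using the lower Laplacian-preconditioner inequality. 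Taking the infimum over unit $\phi$ and dividing yields the stated bound; note $\gamma_0\inf|a|>0$ since $a(x)>0$ on $\Omega$ and (one should check) the essential infimum is positive, so the ratio is finite and the restricted condition number is bounded.

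The routine estimates above are all elementary; the one point that deserves care — and is the main obstacle — is making sure every quantity is well-defined and finite on the space $\mW_0$. Specifically, one must check that for $\phi\in\mW_0$ the function $\mR\phi$ genuinely lies in $\Honezero$ so that $\|(\mR\phi)'\|$ makes sense and $\mathcal{B}[\mR\phi,\mR\phi]$ is a legitimate finite number; this is exactly what the smoother property ($\mR:\Ltwo\to\Hone$) and the definition $\mW_0=\{\phi\in\Ltwo:\mR\phi\in\Honezero\}$ guarantee, so the hypotheses are precisely what is needed. One should also note that $\|\mR\|_{\text{op}}$ in the statement is the operator norm restricted to $\mW_0$, which is bounded by the full $\Ltwo$ operator norm, so the bounded-preconditioner hypothesis applies. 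Finally, to conclude that $\kappa_{\mW_0}$ is bounded one needs the denominator to be strictly positive; this follows from $\gamma_0>0$ together with $\operatorname{ess\,inf}_{x\in\Omega}a(x)>0$, which is implicit in "$a(x)>0$ for $x\in\Omega$" with $a\in L^\infty$, and is the hypothesis that prevents the estimate from degenerating.
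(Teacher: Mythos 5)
Your proof is correct and follows essentially the same route as the paper's: split $\mathcal{B}[\mR\phi,\mR\phi]$ into the $a$-term and the $c$-term, bound the first above and below via the Laplacian-preconditioner inequalities, bound the second above via $\|\mR\|_{\text{op}}$ and drop it from below, then take the ratio. The extra remarks on well-definedness and strict positivity of the denominator are sensible but not a departure from the paper's argument.
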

\begin{proof}
If $\phi\in\mW_0$, then $\mR \phi \in \mathcal{H}^1_0(\Omega)$ and we have
\begin{equation}
\mathcal{B}[\mR \phi,\mR \phi] = \int_{-1}^1 a(x) (\mR \phi)'(x) (\mR \phi)'(x)dx + \int_{-1}^1 c(x)(\mR \phi(x))^2 dx.
\label{eq:SplitInequality}
\end{equation}
The first term in~\cref{eq:SplitInequality} can be bounded as follows:
\[
\int_{-1}^1 a(x) (\mR \phi)'(x) (\mR \phi)'(x)dx \leq \|a\|_\infty \| (\mR \phi)'\|^2 \leq \gamma_1 \|a\|_\infty \|\phi\|^2,
\]
where the last inequality uses the fact that $\mR$ is a preconditioner for the Laplacian (see \cref{sec:preconditioning}). We also find that $\int_{-1}^1 a(x) (\mR \phi)'(x) (\mR \phi)'(x)dx \geq \gamma_0\inf_{x\in\Omega} |a(x)| \|\phi\|^2$. For the second term in~\cref{eq:SplitInequality}, we simply have
\[
0 \leq \int_{-1}^1 c(x)(\mR \phi(x))^2 dx \leq \|c\|_\infty \|\mR \phi\|^2 \leq \|c\|_\infty \|\mR\|_{\text{op}}^2 \|\phi\|^2.
\]
The bound on $\kappa_{\mW_0}(\mR^*\mathcal{L}\mR)$ immediately follows.
\end{proof}

Similar statements to~\cref{thm:ConditionNumber} appear in the literature on operator preconditioners for Galerkin discretizations~\cite{hiptmair2006operator,kirby2010functional}. \Cref{thm:ConditionNumber} has a slightly different flavor because $\mR$ and $\mathcal{L}$ are operators.

In~\cref{eq:preconditionedKrylov}, the Krylov space is based on the operator $\mathcal{T} = \Pi_{\mathcal{W}_0}^*\mR^*\mathcal{L}\mR\Pi_{\mathcal{W}_0}$ and the (restricted) condition number of $\mathcal{T}$ immediately follows from~\cref{thm:ConditionNumber}.
\begin{corollary}
With the same assumptions as~\cref{thm:ConditionNumber}, we have
\[
\kappa_{\mathcal{W}_0}(\mathcal{T}) = \kappa_{\mW_0}(\mR^*\mathcal{L}\mR),\qquad \mathcal{T} = \Pi_{\mathcal{W}_0}^*\mR^*\mathcal{L}\mR\Pi_{\mathcal{W}_0},
\]
where $\Pi_{\mathcal{W}_0}: \Ltwo\rightarrow \mathcal{W}_0$ is the orthogonal projection operator onto $\mathcal{W}_0$.
\label{cor:RestrictedConditionNumber}
\end{corollary}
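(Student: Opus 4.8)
The plan is to show that restricting $\mR^*\mL\mR$ to $\mW_0$ and then conjugating by the orthogonal projection $\Pi_{\mW_0}$ does not change the relevant supremum and infimum, so the condition numbers in \cref{eq:ConditionNumber} coincide. The key observation is that $\Pi_{\mW_0}$ acts as the identity on $\mW_0$: for any $\phi \in \mW_0$ we have $\Pi_{\mW_0}\phi = \phi$, and since $\Pi_{\mW_0}$ is self-adjoint, $\Pi_{\mW_0}^*\phi = \phi$ as well.

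First I would compute, for $\phi \in \mW_0$, the quantity $\<\mathcal{T}\phi, \phi\>$. Using $\mathcal{T} = \Pi_{\mW_0}^*\mR^*\mL\mR\Pi_{\mW_0}$ and moving one $\Pi_{\mW_0}$ across the inner product via self-adjointness, we get $\<\mathcal{T}\phi,\phi\> = \<\mR^*\mL\mR\Pi_{\mW_0}\phi, \Pi_{\mW_0}\phi\> = \<\mR^*\mL\mR\phi,\phi\>$ because $\Pi_{\mW_0}\phi = \phi$. The right-hand side is exactly $\mathcal{B}[\mR\phi,\mR\phi]$ by the definition of the bilinear form associated with $\mL$ (and the fact that $\mR\phi \in \Honezero$ when $\phi \in \mW_0$, so the weak-form identity applies). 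Hence the numerator and denominator defining $\kappa_{\mW_0}(\mathcal{T})$ — namely $\sup_{\phi\in\mW_0,\|\phi\|=1}\<\mathcal{T}\phi,\phi\>$ and the corresponding infimum, which is the natural way to read off the condition number of the self-adjoint operator $\mathcal{T}$ restricted to $\mW_0$ — agree termwise with those defining $\kappa_{\mW_0}(\mR^*\mL\mR)$ in \cref{eq:ConditionNumber}. Taking the ratio gives the claimed equality, and the value is finite by \cref{thm:ConditionNumber}.

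One point I would be careful about is making sure the condition number $\kappa_{\mW_0}(\mathcal{T})$ is being defined consistently with \cref{eq:ConditionNumber}: since $\mathcal{T}$ maps $\Ltwo$ to $\mW_0$ but we only care about its action on the invariant subspace $\mW_0$ (vectors orthogonal to $\mW_0$ are killed by the inner $\Pi_{\mW_0}$), the restricted condition number is the ratio of the extreme values of the Rayleigh-type quotient $\mathcal{B}[\mR\phi,\mR\phi]/\|\phi\|^2$ over $\phi\in\mW_0$, which is literally the expression in \cref{eq:ConditionNumber}. So there is essentially nothing to prove beyond unwinding the definitions and invoking $\Pi_{\mW_0}|_{\mW_0} = \mathrm{id}$ together with $\Pi_{\mW_0}^* = \Pi_{\mW_0}$. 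I do not anticipate a genuine obstacle here; the only mild subtlety is stating precisely what "restricted condition number of $\mathcal{T}$" means so that the equality is a tautology rather than something requiring spectral theory — I would handle this by defining $\kappa_{\mW_0}(\mathcal{T})$ via the same sup/inf formula over $\mW_0$ and simply verifying the two formulas have identical integrands.
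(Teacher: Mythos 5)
Your argument is correct and matches the paper's (implicit) reasoning: the paper states this corollary without proof, treating it as immediate precisely because $\Pi_{\mW_0}$ acts as the identity on $\mW_0$ and is self-adjoint, so the Rayleigh quotients defining the two restricted condition numbers coincide termwise. You have simply made explicit the definitional unwinding the paper leaves to the reader, so there is nothing to add.
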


The bound on $\kappa_{\mathcal{W}_0}(\mathcal{T})$ allows us to prove that $\|u-u_k\|_{\mathcal{L}}$ geometrically decays to zero as $k\rightarrow\infty$.
\begin{corollary}
With the same assumptions as~\cref{thm:ConditionNumber}, let $u_0=0, u_1,\ldots,$ be the CG iterates from~\cref{alg:CGBVP2}. Then,
\begin{equation}
\|u-u_k\|_\mathcal{L}  \leq 2 \left( \frac{\sqrt{\kappa_{\mathcal{W}_0}(\mathcal{T})}-1}{\sqrt{\kappa_{\mathcal{W}_0}(\mathcal{T})}+1} \right)^k \| u \|_\mathcal{L}, , \qquad k\geq 0,
\label{eq:finalErrorBound}
\end{equation}
where $\mathcal{T} = \Pi_{\mathcal{W}_0}^*\mR^*\mathcal{L}\mR\Pi_{\mathcal{W}_0}$ and $u$ is the exact solution to~\cref{eq:PDE}.
\label{cor:CGconvergence}
\end{corollary}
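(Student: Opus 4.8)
The plan is to reduce the statement to the classical convergence estimate for the conjugate gradient method applied to a self-adjoint positive-definite operator on a Hilbert space, using the machinery already assembled in the excerpt. First I would observe that by \cref{thm:best}, the iterates $v_k$ (with $u_k=\mR v_k$) are exactly the CG iterates associated with the operator $\mathcal{T}=\Pi_{\mathcal{W}_0}^*\mR^*\mathcal{L}\mR\Pi_{\mathcal{W}_0}$ restricted to $\mathcal{W}_0$, minimizing $\|v-q\|_{\mathcal{T}}$ over $q\in\K_k(\mathcal{T},\mR^*f)$. The key point is that $\mathcal{T}$, viewed as an operator on the Hilbert space $\mathcal{W}_0$ with the $L^2$ inner product, is self-adjoint and positive definite with spectrum contained in $[\lambda_{\min},\lambda_{\max}]$, where $\lambda_{\max}=\sup_{\phi\in\mathcal{W}_0,\|\phi\|=1}\B[\mR\phi,\mR\phi]$ and $\lambda_{\min}=\inf_{\phi\in\mathcal{W}_0,\|\phi\|=1}\B[\mR\phi,\mR\phi]$, so that $\lambda_{\max}/\lambda_{\min}=\kappa_{\mathcal{W}_0}(\mathcal{T})$; by \cref{cor:RestrictedConditionNumber} this equals $\kappa_{\mathcal{W}_0}(\mR^*\mathcal{L}\mR)$, which is finite by \cref{thm:ConditionNumber}. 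Positivity and the spectral bounds require knowing that these infima and suprema are attained over the relevant subspace in a way that is consistent with the spectral theorem; one needs $\lambda_{\min}>0$, which follows because $\B[\mR\phi,\mR\phi]\geq \gamma_0\inf_{x}|a(x)|\,\|\phi\|^2$ as established inside the proof of \cref{thm:ConditionNumber}.

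The next step is the standard polynomial argument. Since $v_0=0$, we have $v-v_k = v - q$ for some $q\in\K_k$, equivalently $v-v_k = p_k(\mathcal{T})v$ for a polynomial $p_k$ of degree $\le k$ with $p_k(0)=1$, and the minimization property of CG gives
\[
\|v-v_k\|_{\mathcal{T}} = \min_{\substack{p\in\Pi_k\\ p(0)=1}} \|p(\mathcal{T})v\|_{\mathcal{T}} \le \left(\min_{\substack{p\in\Pi_k\\ p(0)=1}} \max_{\lambda\in[\lambda_{\min},\lambda_{\max}]} |p(\lambda)|\right)\|v\|_{\mathcal{T}},
\]
where $\Pi_k$ denotes polynomials of degree at most $k$ and the last inequality uses the spectral theorem for the self-adjoint operator $\mathcal{T}$ on $\mathcal{W}_0$ together with the fact that $\|\cdot\|_{\mathcal{T}}$-norm and operator functional calculus commute. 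The scaled-and-shifted Chebyshev polynomial then bounds the min-max quantity by $2\big((\sqrt{\kappa}-1)/(\sqrt{\kappa}+1)\big)^k$ with $\kappa=\kappa_{\mathcal{W}_0}(\mathcal{T})$; this is the classical estimate, e.g. \cite[Chap.~2]{meurant2006lanczos}. Finally I would translate back: by \cref{thm:best}'s computation $\|v-v_k\|_{\mathcal{T}}=\|u-u_k\|_{\mathcal{L}}$ and $\|v\|_{\mathcal{T}}=\|u\|_{\mathcal{L}}$, which yields \cref{eq:finalErrorBound}.

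The main obstacle is justifying the spectral-theoretic step rigorously in the operator (rather than matrix) setting: one must confirm that $\mathcal{T}$ restricted to $\mathcal{W}_0$ is a genuinely self-adjoint, bounded, positive operator on the Hilbert space $\mathcal{W}_0$ (the "smoother" and "bounded" hypotheses on $\mR$ are what deliver boundedness of $\mR^*\mathcal{L}\mR$ on $\Hone$, hence on $\mathcal{W}_0$), so that the functional calculus and the bound $\|p(\mathcal{T})\|\le\max_{[\lambda_{\min},\lambda_{\max}]}|p|$ are legitimate. One also has to be slightly careful that the CG recursion in \cref{alg:CGBVP2} does not break down — i.e., $\B[\mR p_k,\mR p_k]\neq 0$ until convergence — which again follows from positive-definiteness of $\mathcal{T}$ on $\mathcal{W}_0$. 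Once these structural facts are in place, the remainder is the textbook CG argument and I would simply cite it rather than reproduce the Chebyshev estimate.
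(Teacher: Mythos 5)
Your proposal is correct and follows essentially the same route as the paper: invoke the boundedness of $\kappa_{\mathcal{W}_0}(\mathcal{T})$ from \cref{cor:RestrictedConditionNumber}, apply the classical Chebyshev-polynomial CG bound to the iterates $v_k$ in the $\|\cdot\|_{\mathcal{T}}$ norm, and translate back via $\|v\|_{\mathcal{T}}^2=\mathcal{B}[\mR v,\mR v]=\|\mR v\|_{\mathcal{L}}^2$ and $u_k=\mR v_k$. The only difference is that you unpack the min-max/spectral-calculus details that the paper compresses into ``by copying the proof of the convergence bound for the CG method for matrices,'' which is a reasonable elaboration rather than a different argument.
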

\begin{proof}
\Cref{cor:RestrictedConditionNumber} shows that $\kappa_{\mathcal{W}_0}(\mathcal{T})$ is bounded. By copying the proof of the convergence bound for the CG method for matrices~\cite{meurant2006lanczos}, we find that the iterates $v_0=0,v_1,v_2,\ldots,$ satisfy
\[
\| v - v_k \|_{\mathcal{T} }  \leq 2 \left( \frac{\sqrt{\kappa_{\mathcal{W}_0}(\mathcal{T})}-1}{\sqrt{\kappa_{\mathcal{W}_0}(\mathcal{T})}+1} \right)^k \| v \|_{\mathcal{T}}, \qquad k\geq 0,
\]
where $u = \mR  v$.  The result follows since $\|v\|_\mathcal{T}^2 = \mathcal{B}[ \mR v, \mR v] = \|\mR v\|_\mathcal{L}^2$, and $u_k = \mR v_k$.
\end{proof}
\Cref{cor:CGconvergence} implies that the preconditioned CG method in~\cref{alg:CGBVP2} constructs iterates $u_0=0,u_1,u_2,\ldots,$ that converge geometrically to $u$ in the $\|\cdot \|_\mathcal{L}$ norm. In other words, for an accuracy goal of $0<\epsilon<1$ we require
\[
k\geq \Bigg\lceil \frac{\log{(2/\epsilon)}}{\log\left(\sqrt{\kappa_{\mathcal{W}_0}(\mathcal{T})}+1\right) - \log\left(\sqrt{\kappa_{\mathcal{W}_0}(\mathcal{T})}-1\right)} \Bigg\rceil,
\]
iterations to guarantee that $\| u - u_k \|_{\mathcal{L} }\leq \epsilon \|u\|_{\mathcal{L}}$. Here, $\lceil x \rceil$ denotes the smallest integer greater than or equal to $x$.  Since $\kappa_{\mathcal{W}_0}(\mathcal{T})$ is bounded, the preconditioned CG method can be terminated after a finite number of iterations.

\Cref{fig:preconditioned} shows the convergence of the preconditioned CG method compared to the error bound in~\cref{eq:finalErrorBound} when solving three BVPs using the indefinite integration preconditioner $\mR v = \int_{-1}^x v(s)ds$. The convergence behavior of the preconditioned CG method comes with theoretical guarantees, and is a vast improvement over the convergence of the unpreconditioned CG method (see~\cref{fig:unpreconditioned} (right)).

\begin{figure}
\centering
\begin{minipage}{.49\textwidth}
\begin{overpic}[width=\textwidth]{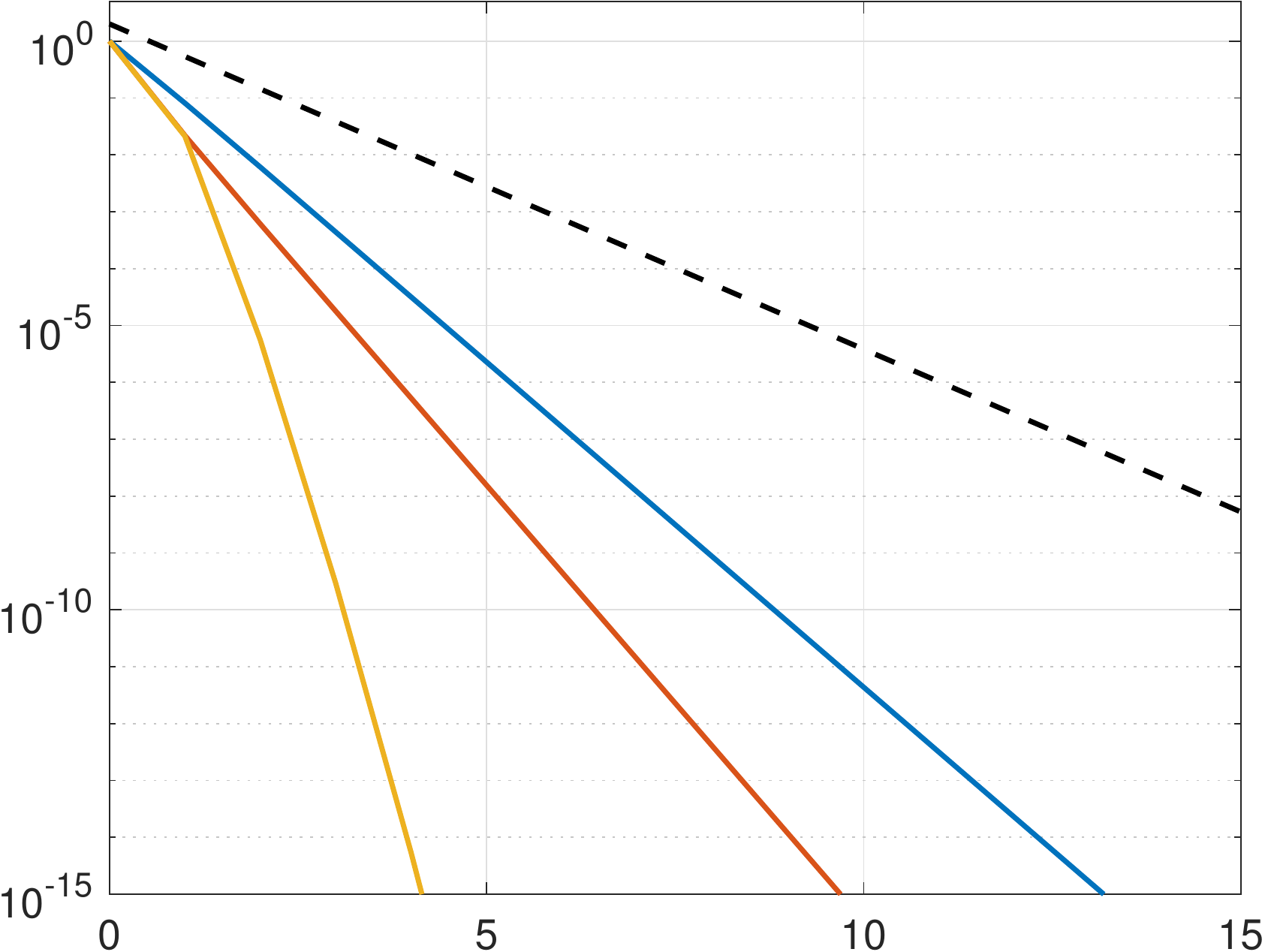}
\put(40,-2) {$k$, iteration}
\put(-5,20) {\rotatebox{90}{$\| u - u_k\|_{\mathcal{L}}/\|u\|_{\mathcal{L}}$}}
\put(50,60) {\rotatebox{-23}{$2\!\left(\frac{\sqrt{3}-1}{\sqrt{3}+1}\right)\!{}^{k}$}}
\put(50,40) {\rotatebox{-40}{(E1)}}
\put(40,36) {\rotatebox{-50}{(E2)}}
\put(25,35) {\rotatebox{-70}{(E3)}}
\end{overpic}
\end{minipage}
\caption{Convergence of the preconditioned CG method for three BVPs with zero Dirichlet boundary conditions. (E1):  $\smash{-((2+ \cos(\pi x))u')' = f}$ (blue line), (E2): $\smash{-( (1 + x^2)u' )' + (\tfrac{\pi}{4}\cos(\pi x))^2 u= f}$ (red line), and (E3): $\smash{-u'' + 2(\tfrac{\pi}{4})^2u = f}$ (yellow line) with $\smash{f = (1+x^2)^{-1}}$. \Cref{cor:CGconvergence} gives the same bound for these three examples (black dashed line). Note that $\mR^* f\not\in \mW_0$ so an ancillary problem is solved before applying the CG method for these three BVPs (see \cref{sec:GeneralRHS}).}
\label{fig:preconditioned}
\end{figure}

\subsection{General right-hand sides}\label{sec:GeneralRHS}
Here, we remove the assumption that $\mR^*f \in \mW_{0}$ by solving an ancillary problem that converts $f$ into a right-hand side that is amenable to our operator CG method.\footnote{In the standard Galerkin framework, one seeks to find a solution to~\cref{eq:PDE} via the weak formulation $\mathcal{B}[u,\psi] = \<f,\psi\>$ for all $\psi\in\Honezero$, even when $f\not\in \Honezero$. This is theoretically justified because $\Honezero$ is a dense subspace of $\Ltwo$. In our setup, the test space $\mW_0$ is not a dense subset of $L^2(\Omega)$ so solving an ancillary problem is necessary. }

Write the solution to~\cref{eq:preconditionedPDE} as $v = v_1 + v_2 $, where $v_2$ is any solution from $\mW_0$ that solves the following ancillary problem:
\begin{equation}
\left[\mR \mR^* \mL \mR v_2\right]\!(\pm 1) = \left[\mR \mR^*f \right]\!(\pm 1).
 \label{eq:BCequations}
\end{equation}
The remaining part of the solution, i.e., $v_1$, then satisfies
\[
\mR^* \mL \mR v_1 = \mR^*g, \qquad g = (f - \mR^* \mL \mR v_2).
\]
By construction, $\left[\mR\mR^*g\right]\!(\pm 1) = 0$ and since $g\in\Ltwo$, we find that $\mR^*g\in \mW_0$. Therefore, we can solve
\[
\mR^* \mL \mR v_1 = \mR^*g
\]
via the preconditioned CG method in~\cref{alg:CGBVP2}.

When $\mathcal{R}$ is the indefinite integral preconditioner in~\cref{eq:IntegralPreconditioner}, the condition at $-1$ in~\cref{eq:BCequations} is trivially satisfied and the ancillary problem reduces to solving
\begin{equation}
\int_{-1}^1 \!\!a(s) v_2(s) ds + \int_{-1}^1 \!\!\left(\int_{s}^1c(t)(t+1)dt\right) v_2(s) ds = \int_{-1}^1 (s+1)f(s) ds, \quad v_2\in \mW_0.
\label{eq:ancillaryProblem2}
\end{equation}
This problem can be solved efficiently by picking any $w \in \mW_0$ such that the lefthand side of~\cref{eq:ancillaryProblem2}, with $v_2$ replaced by $w$, is a scalar $\eta\neq 0$ and setting $v_2 = (\tfrac{1}{\eta}\smash{\int_{-1}^1} (s+1)f(s) ds)w$. Usually, $w(s)= s$ is an adequate choice.

\section{Practical realizations of the operator CG method}\label{sec:CG1dAlgorithm}
We now describe two realizations of the theoretical framework in \cref{sec:CGtheory} for solving~\cref{eq:PDE}.  While the theory in \cref{sec:CGtheory} works for the solution space $\Honezero$, in practice, we usually first define a dense subspace $\mV$ of $\Hone$ and associated subspace $\mW =\{ w \in L^2(\Omega) : \mR w \in \mV \}$ on which the operations performed by the CG method can be efficiently computed.  Provided that the operations performed by the CG method map functions from $\mW$ to $\mW$ and the right-hand side of~\cref{eq:PDE} and its variable coefficients are in $\mW$, the preconditioned CG method in \cref{sec:CGtheory} is unaware of the subspace $\mV$. In this section, we consider: (1) $\mV$ being the space of analytic functions and (2) $\mV$ being the space of continuous piecewise analytic functions (with a finite number of fixed breakpoints).  In these two cases the approximation space for the solution to~\cref{eq:PDE} is $\mV_0 = \{ \phi\in \mV : \phi(\pm1)=0\}\subset \Honezero$.

We have implemented (1) and (2) in Chebfun~\cite{Chebfun} in the \texttt{pcg} command, which follows the syntax of the standard MATLAB \texttt{pcg} command for matrices. Fortunately, object-oriented programming allows us to only have one implementation of the operator CG method for (1) and (2) as Chebfun automatically calls the appropriate underlying algorithms to compute inner-products, integrals, and derivatives via operator overloading.  This is one of the advantages of developing a Krylov-based solver that works independently from the underlying discretization of the solution and right-hand side.  Unlike most BVP solvers, our Krylov-based solvers have no fixed discretization. Instead, we let Chebfun automatically resolve the functions that appear during the operator CG method to machine precision~\cite{aurentz2017chopping}. A summary of the main operations that the preconditioned CG method requires is given in~\cref{tab:CGcosts}, along with the corresponding Chebfun commands.
\begin{table}
  \caption{Summary of the main operations that are required by the preconditioned CG method. The Chebfun commands that execute these mathematical operations are also given. Objected-oriented programming and operator overloading allows the same Chebfun command to employ different underlying algorithms depending on whether \texttt{p} and \texttt{q} are analytic or piecewise analytic.}
  \label{tab:CGcosts}
  \centering
 \begin{tabular}{ c c c}
 \toprule
Operation & Mathematical operation & Chebfun command\\[3pt]
 \midrule
 Preconditioner & $\int_{-1}^x p(s) ds$, $\int_{x}^1 p(s)ds$ & \texttt{cumsum(p)}, \texttt{sum(p)-cumsum(p)} \\[3pt]
 Differentiation & $p'(x)$ & \texttt{diff(p)} \\[3pt]
 Product & $p(x)q(x)$ & \texttt{p*q} \\[3pt]
 Inner-product & $\int_{-1}^1 p(s)q(s)ds$ & \texttt{p'*q} \\[3pt]
 Projector & $p - \frac{1}{2}\int_{-1}^1 p(s)ds$ & \texttt{p - mean(p)}\\[3pt]
\bottomrule
\end{tabular}
\end{table}

\subsection{Analytic functions}\label{sec:analytic}
Let $\mV$ be the space of functions that are analytic in an open neighborhood of $[-1,1]$ and consider the preconditioner $\mR \phi = \int_{-1}^x \phi(s) ds$. Note that the associated space $\mW$ is closed under indefinite integration, differentiation, and function product, and that $\mR $ is bounded, is a smoother, and preconditions the Laplacian. The choice of $\mV$ and $\mR$ completely determine a realization of the preconditioned CG method with the approximation space for the solution $\mV_0 = \{ \phi\in \mV : \phi(\pm1)=0\}$. Here, we are implicitly assuming that the variable coefficients in~\cref{eq:PDE} are analytic functions or have been approximated by analytic functions.

In order to implement an efficient practical algorithm, we approximate analytic functions to within machine precision $\epsilon_{\text{mach}}$ by Chebyshev expansions. That is, for some integer $n\geq 0$ that is adaptively determined~\cite{aurentz2017chopping}, we approximate an analytic function $\phi\in \mV$ by
\begin{equation}
\phi(x) \approx p(x) = \sum_{k=0}^n \alpha_k T_k(x),\qquad \|\phi-p\|_\infty < \epsilon_{\text{mach}}\|\phi\|_\infty,
\label{eq:ChebyshevExpansion}
\end{equation}
where $T_k(x)$ is the degree $k$ Chebyshev polynomial and $\|\,\cdot\,\|_\infty$ is the absolute maximum norm on $[-1,1]$. If $p$ is the Chebyshev interpolant of an analytic function $\phi$, then the Chebyshev expansion coefficients in~\cref{eq:ChebyshevExpansion} converge geometrically to zero~\cite[Chap.~8]{trefethen2013approximation}. Moreover, the expansion coefficients $\{\alpha_k\}$ in~\cref{eq:ChebyshevExpansion} can be computed in $\mathcal{O}(n\log n)$ via the discrete Chebyshev transform~\cite{gentleman1972implementing}.
To automatically resolve a function $\phi\in \mV$ to machine precision, we call the Chebfun command \texttt{p = chebfun(phi)}.

There are a number of operations that the CG method must perform on the adaptively determined Chebyshev expansions:
\begin{description}[leftmargin=*,noitemsep]

\item[{\bf Applying the preconditioner and its adjoint:}] For $p(x) = \sum_{k=0}^n \alpha_k T_k(x)$, we need to compute $\mR p = \int_{-1}^x p(s)ds$. The Chebyshev expansion coefficients for $\mathcal{R}p$ can be computed by using a simple recurrence relation~\cite[Sec.~8.1]{mason2002chebyshev}, costing $\mathcal{O}(n)$ operations. This is implemented in the Chebfun command \texttt{cumsum(p)}. Similarly, $\mR^*p$ can be computed with the Chebfun command \texttt{sum(p)-cumsum(p)} in $\mathcal{O}(n)$ operations.

\item[{\bf Applying the differential operator:}] For $p(x) = \sum_{k=0}^n \alpha_k T_k(x)$, we need to compute $\mathcal{L}p$.  If $\mathcal{L}p = -(a(x)p'(x))' + c(x)p(x)$ and $a(x)$ and $c(x)$ are analytic functions and represented by adaptively determined Chebyshev expansions, then we can compute $\mathcal{L}$ via the Chebun commands \texttt{Lp = -diff(a*diff(p))+c*p}. Computing the Chebyshev expansions of $p'(x)$ can be computed in $\mathcal{O}(n)$ operations via a recurrence relation~\cite[p.~34]{mason2002chebyshev} and the coefficients for $a(x)p(x)$ can be computed in $\mathcal{O}(N\log N)$ operations with a discrete Chebyshev transform~\cite{gentleman1972implementing}. Here, $N$ is the maximum polynomial degree required to resolve $a$ and $p$.

\item[{\bf Inner-products:}] Given $p(x) = \sum_{k=0}^n \alpha_k T_k(x)$ and $q(x) = \sum_{k=0}^n \beta_k T_k(x)$, we need to be able to compute
\[
\<p,q\> = \int_{-1}^1 p(s) q(s) ds.
\]
We compute this by Clenshaw--Curtis quadrature~\cite[Chap.~19]{trefethen2013approximation}, costing $\mathcal{O}(n\log n)$ operations.  The integral is computed by the Chebfun command \texttt{p'*q}.

\item[{\bf Applying the projection operator:}] For $p(x) = \sum_{k=0}^n \alpha_k T_k(x)$, we need to compute the projection
\[
\Pi_{\mW_0} p = p- \frac{1}{2}\int_{-1}^1p(s)ds.
\]
This can be achieved in $\mathcal{O}(n\log n)$ operations by using Clenshaw--Curtis quadrature for definite integration~\cite[Chap.~19]{trefethen2013approximation}. The projection operator is computed by Chebfun with the command \texttt{p-mean(p)}.

\end{description}

Since this realization of the preconditioned CG method employs adaptively selected polynomials to resolve the solution of~\cref{eq:PDE}, we compare our preconditioned CG method against adaptive implementations of the spectral collocation method\footnote{More precisely, we compare against rectangular spectral collocation~\cite{driscoll2015rectangular}, which performs a projection of the range of the matrices to automatically deal with boundary conditions of BVPs. Rectangular spectral collocation is employed by default in the \texttt{chebop} class of Chebfun~\cite{driscoll2008chebop}.} and the ultraspherical discretization~\cite{olver2013fast}. Both these adaptive spectral methods are implemented in Chebfun.

To do the comparison, we consider the family of BVPs parametrized by $\omega_1$ and $\omega_2$ such that
\[
-((2+\cos(\omega_1 \pi x))u'(x))' = f(x) \text{ on } \Omega = (-1,1), \qquad u(\pm1) = 0,
\]
where the right-hand side $f(x)$ is chosen so that $u(x) = \sin(\omega_2 \pi x)$ is the exact solution. We investigate two regimes: (a) $\omega_1$ fixed, $\omega_2\rightarrow \infty$ and (b) $\omega_2$ fixed, $\omega_1\rightarrow \infty$. In the first regime, a high degree polynomial is required to resolve the solution to machine precision while the variable coefficients of the BVP can be resolved by a low degree polynomial. This is a setting in which the ultraspherical spectral method is competitive with the preconditioned CG method (see~\cref{fig:CGtimings} (left)). In the second regime, the variable coefficients of the BVP require high degree polynomials to resolve, leading to dense spectral discretization matrices for both spectral collocation and the ultraspherical spectral method. In this setting, we find that it is computationally beneficial to employ our preconditioned CG method.

\begin{figure}
\centering
\begin{minipage}{.49\textwidth}
\begin{overpic}[width=\textwidth]{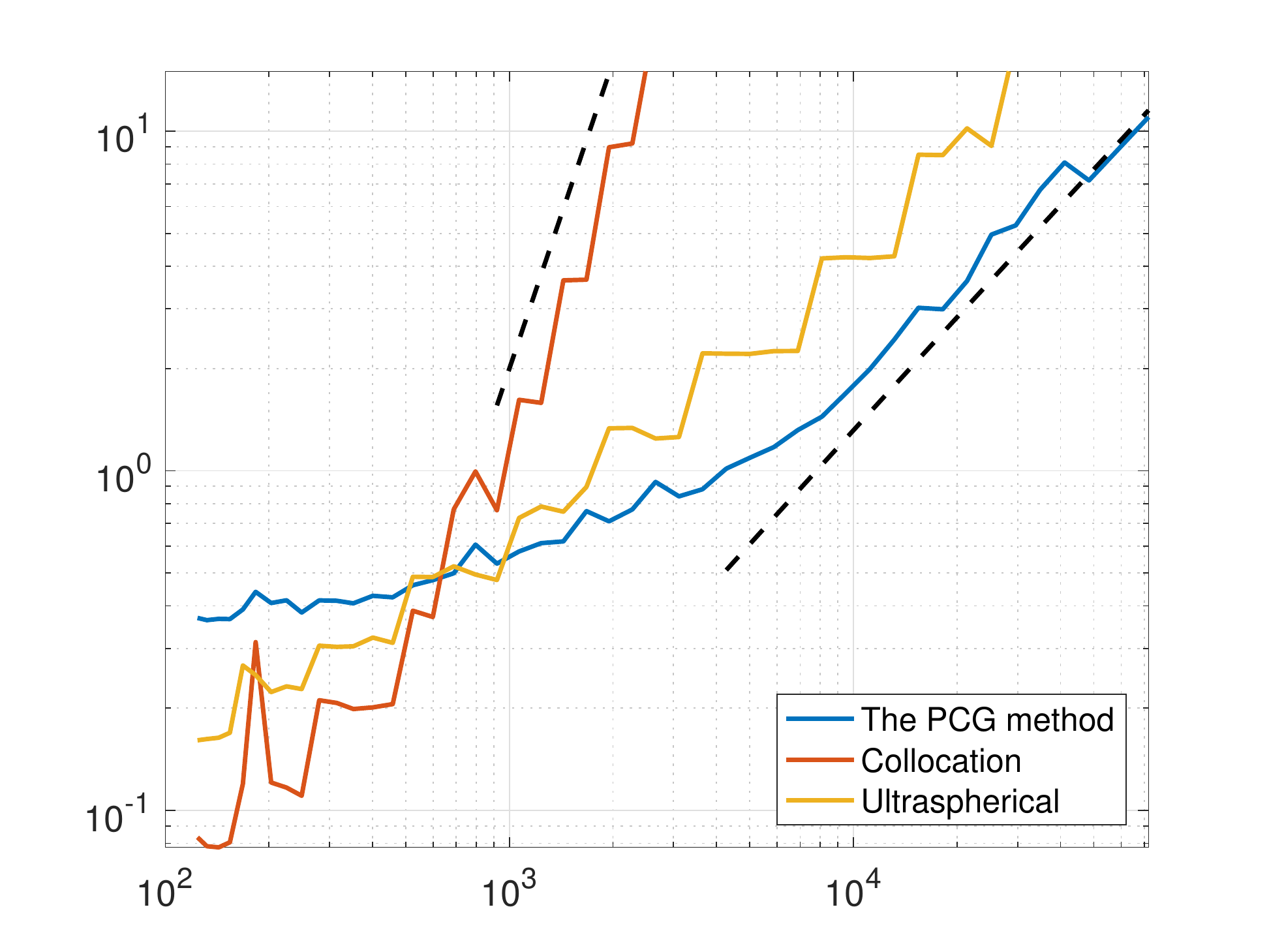}
\put(50,0) {$\omega_2$}
\put(0,20) {\rotatebox{90}{Execution time}}
\put(35,50) {\rotatebox{70}{\footnotesize{$\mathcal{O}(\omega_2^3)$}}}
\put(70,40) {\rotatebox{47}{\footnotesize{$\mathcal{O}(\omega_2\log \omega_2)$}}}
\end{overpic}
\end{minipage}
\begin{minipage}{.49\textwidth}
\begin{overpic}[width=\textwidth]{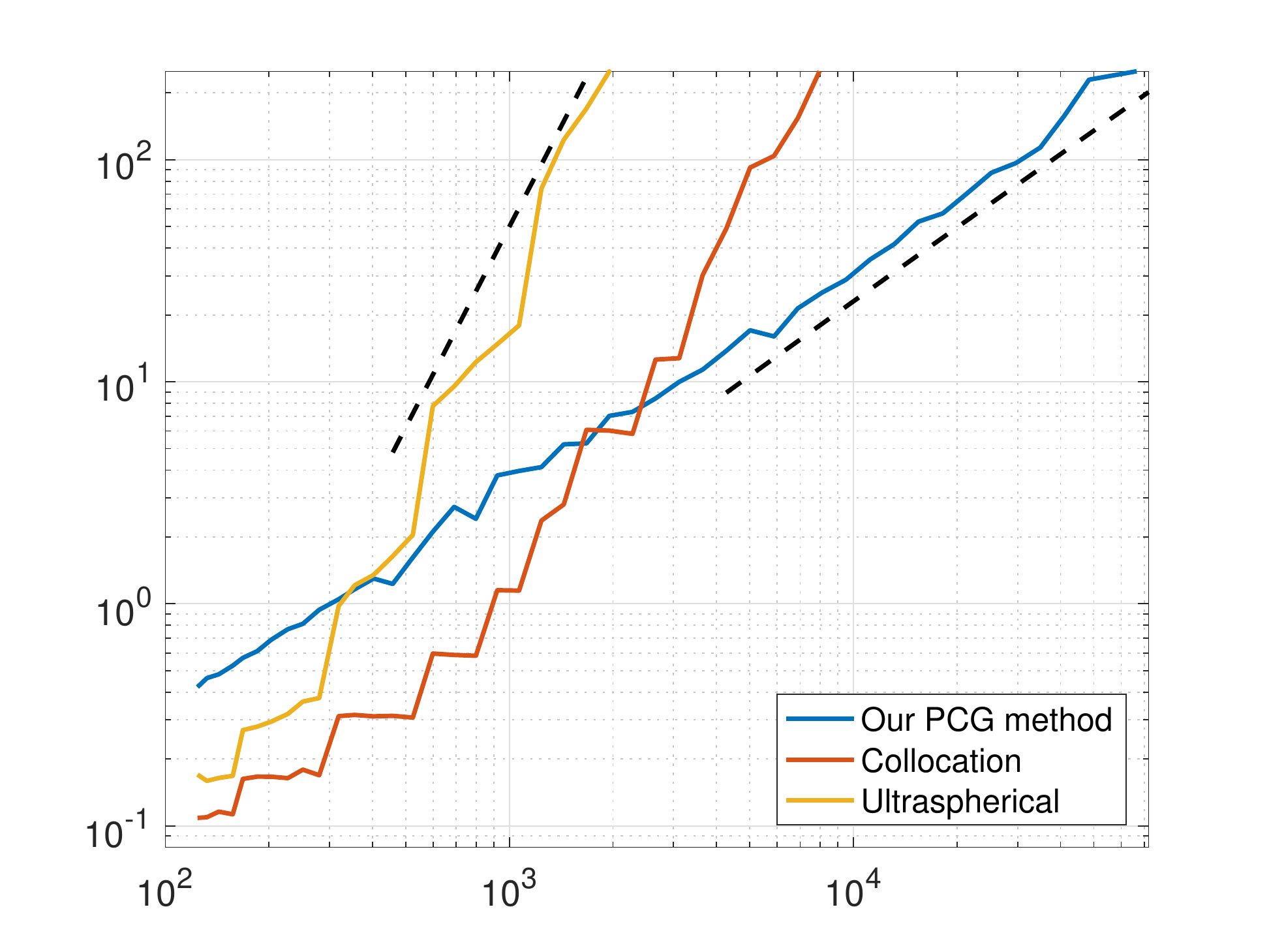}
\put(50,0) {$\omega_1$}
\put(0,20) {\rotatebox{90}{Execution time}}
\put(65,45) {\rotatebox{36}{\footnotesize{$\mathcal{O}(\omega_1\log \omega_1)$}}}
\put(30,50) {\rotatebox{68}{\footnotesize{$\mathcal{O}(\omega_1^3)$}}}
\end{overpic}
\end{minipage}
\caption{Comparison of execution timings of our preconditioned CG method (blue line), spectral collocation (red line), and the ultraspherical spectral method (yellow line) for the BVP $-((2+\cos(\omega_1 \pi x))u')' = f(x)$, $u(\pm 1)=0$, where $f$ is chosen so that $u(x) = \sin(\omega_2 \pi x)$ is the solution. All spectral methods are implemented in an adaptive manner to automatically resolve the BVP solution to essentially machine precision. Spectral collocation and the ultraspherical spectral method discretize the BVP and then solve the resulting linear system. Left: The parameter $\omega_2$ is increased while $\omega_1 = 10$, which defines a family of BVPs for which the solution requires a high polynomial degree to resolve to machine precision. Right: The parameter $\omega_1$ is increased while $\omega_2 = 10$, which defines a family of BVPs for which the variable coefficients require a high polynomial degree to resolve to machine precision. The polynomial degree required to resolve $\cos(\omega_1 \pi x)$ and $\sin(\omega_2 \pi x)$ on $[-1,1]$ to machine precision is $\mathcal{O}(\omega_1)$ and $\mathcal{O}(\omega_2)$, respectively.}.
\label{fig:CGtimings}
\end{figure}

From these experiments and others, we learn that the preconditioned CG method is computationally beneficial compared to standard spectral methods employing direct solvers when spectral methods generate linear systems that are large and dense.  A similar comparison can be made between direct and iterative solvers for linear systems.

\subsection{Continuous functions that are piecewise analytic}
Let $\mV\subset \Hone$ be the space of continuous functions that are piecewise analytic with a finite number of fixed breakpoints $-1 = x_0 < x_1 < \cdots < x_{M+1} = 1$. That is, the space of continuous functions $\phi$ such that $\smash{\phi|_{[x_i,x_{i+1}]}}$ is analytic in a neighborhood of $[x_i,x_{i+1}]$ for $0\leq i\leq M$. Again, we take the preconditioner to be $\mR \phi = \int_{-1}^x \phi(s) ds$. The induced space $\mW = \{v \in \Ltwo :  \mathcal{R}v \in \mathcal{V} \}$
does not have a continuity requirement. The approximation space $\mW$ is closed under indefinite integration and multiplication and weak differentiation. This implies that all the functions that appear in the preconditioned CG method are in $\mW$.

Given a function that is piecewise analytic, we represent it by subdividing the interval $[-1,1]$ into $M+1$ subintervals, i.e., $[-1,x_1]\cup [x_1,x_2]\cup \cdots \cup [x_M,1]$, and representing the function by a Chebyshev expansion on each subinterval~\cite{pachon2009piecewise}. The Chebfun command that automatically determines the breakpoint locations and the polynomial degree to use on each subinterval is \texttt{p=chebfun(phi,'splitting','on')}. Any function that is computed during the CG method is automatically resolved in a piecewise fashion by Chebfun.

To solve for a piecewise smooth solution using spectral collocation or the ultraspherical spectral method, one has to construct a matrix that imposes the BVP operator on each subinterval along with continuity conditions at $x_i$ for $1\leq i\leq M$~\cite{driscoll2015rectangular}. In our preconditioned CG method the iterates $v_k$ belong to $\mW_0$, which is a space that contains functions that are not continuous. However, continuity on the approximate solutions $u_k = \mR v_k$ is implicitly imposed because $\mathcal{R}$ acts as a smoother.

The algorithms to compute the tasks of applying the preconditioner, the differential operator, and the projection operator are almost immediate from the algorithms in \cref{sec:analytic}. For example, if $\phi\in\mV$ and $x\in [x_m,x_{m+1}]$ for some $0\leq m\leq M$, then
\[
\mR \phi = \int_{-1}^x \phi(s) ds = \sum_{i=0}^{m-1} \int_{x_i}^{x_{i+1}} \phi(s) ds + \int_{x_m}^x \phi(s) ds.
\]
Therefore, to calculate the piecewise analytic function of $\mR \phi$ on $[x_m,x_{m+1}]$ one performs indefinite integration on $[x_m,x_{m+1}]$ using a recurrence relation~\cite[Sec.~8.1]{mason2002chebyshev} and adds to that the constant $\int_{-1}^{x_m} v(s)ds$ computed by applying Clenshaw--Curtis quadrature to each subinterval~\cite[Chap.~19]{trefethen2013approximation}.

\Cref{fig:piecewiseSpace} demonstrates the preconditioned CG method on three BVPs with piecewise smooth variable coefficients. The solutions of which have the same breakpoints as the variable coefficients. Since Chebfun automatically determines breakpoint locations for piecewise smooth functions~\cite{pachon2009piecewise}, our BVP solver automatically inherits this adaptivity. For piecewise continuous solutions we execute the same \texttt{pcg} command as in \cref{sec:analytic} without modification. As can be seen from the convergence theory in \cref{sec:CGtheory} and~\cref{fig:piecewiseSpace} (right), the convergence rate of the CG method is independent of the smoothness of the solution.

\begin{figure}
\centering
\begin{minipage}{.49\textwidth}
\begin{overpic}[width=\textwidth]{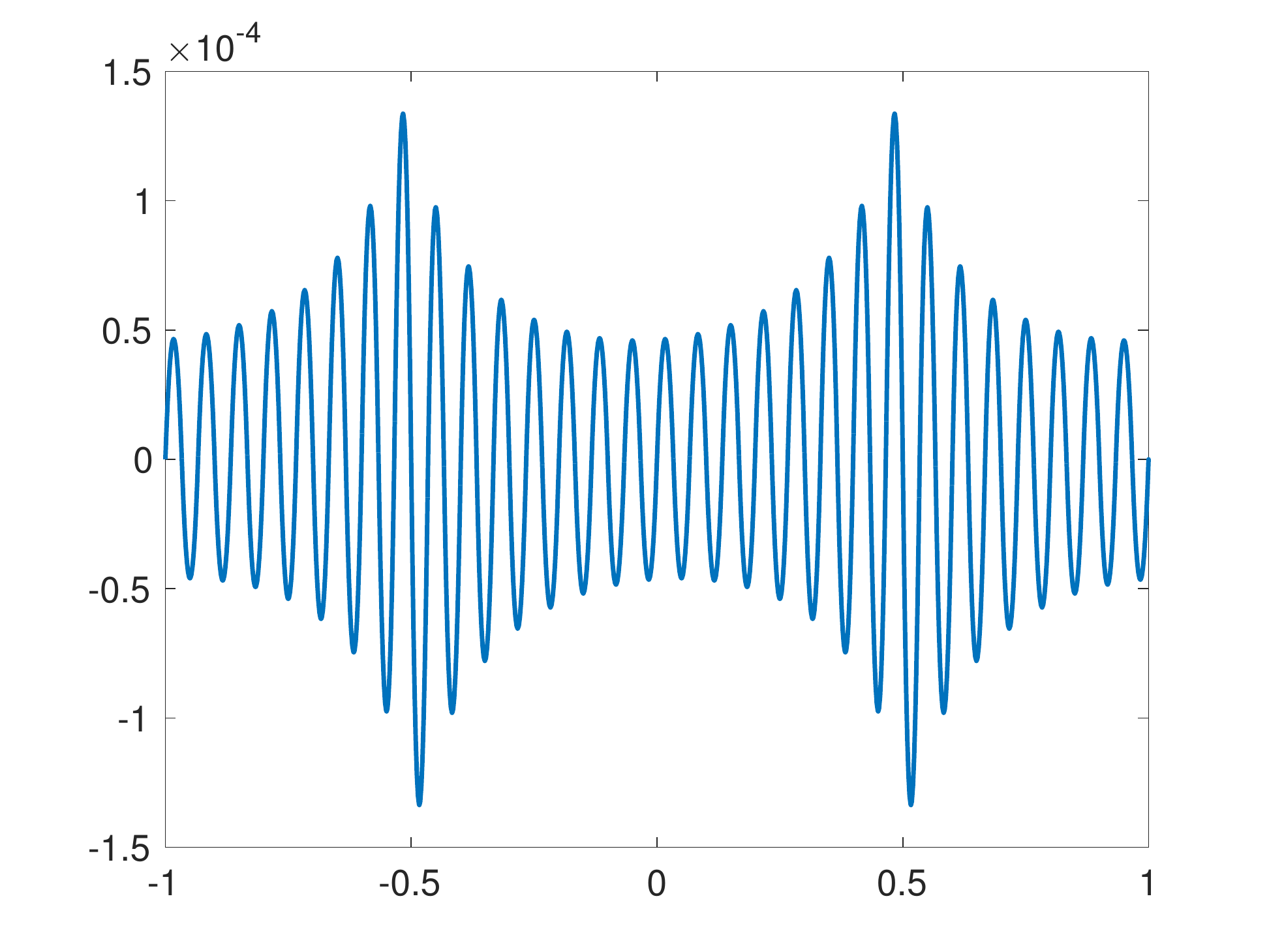}
\put(50,0) {$x$}
\end{overpic}
\end{minipage}
\begin{minipage}{.49\textwidth}
\begin{overpic}[width=\textwidth]{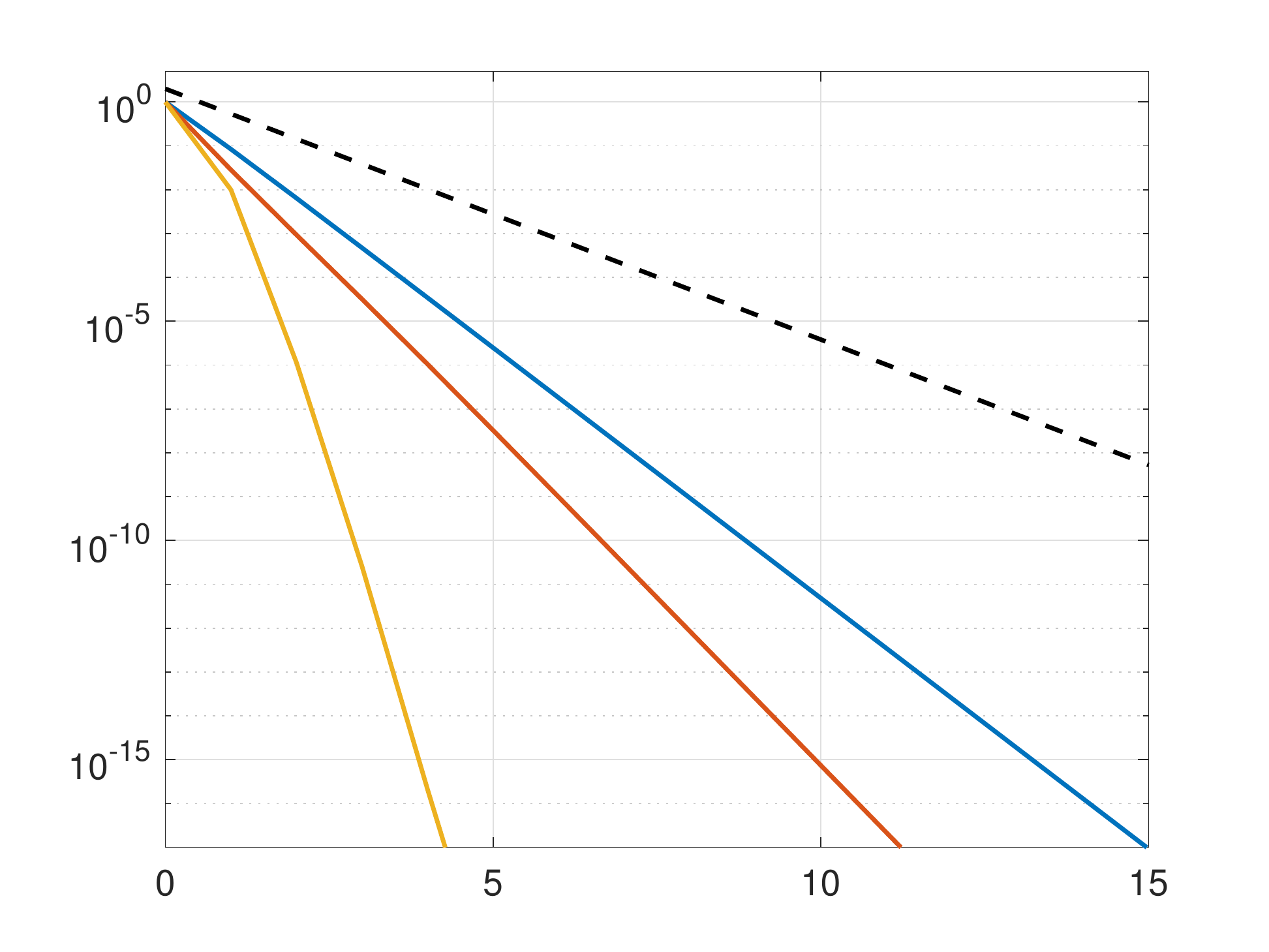}
\put(58,35) {\rotatebox{-40}{(E1)}}
\put(48,34) {\rotatebox{-47}{(E2)}}
\put(27,35) {\rotatebox{-70}{(E3)}}
\put(40,0) {$k$, iteration}
\put(0,20) {\rotatebox{90}{$\| u - u_k\|_{\mathcal{L}}/\|u\|_{\mathcal{L}}$}}
\put(40,62) {\rotatebox{-20}{$2\!\left(\frac{\sqrt{3}-1}{\sqrt{3}+1}\right)\!{}^{k}$}}
\end{overpic}
\end{minipage}
\caption{The preconditioned CG method for continuous functions that are piecewise analytic. Left: Solution to $\smash{-((1 + 2|\! \cos( \pi x) |) u')' = {\rm sign}(\cos(30\pi x))}$ with $u(\pm 1) = 0$. Right: The convergence of the preconditioned CG method for three BVPS with zero Dirichlet boundary conditions. (E1):  $\smash{-((1 + 2|\! \cos( \pi x ) | ) u')' = f}$ (blue line), (E2): $\smash{-( (1 + |\! \sin ( \pi x^2 ) | )u' )' + ( \tfrac{\pi}{4} )^2 | \cos (2 \pi x)| u= f }$ (red line), and (E3): $\smash{-u'' + 2(\tfrac{\pi}{4})^2 |\! \cos (20 \pi x )| u = f}$ (yellow line), where $\smash{f = (1+x^2)^{-1}}$. \Cref{cor:CGconvergence} gives the same convergence bound for these three BVPs (black dashed line).}
\label{fig:piecewiseSpace}
\end{figure}

\section{Other Krylov-based methods}\label{sec:OtherKrylovBasedMethods}
The preconditioned CG method in \cref{sec:CGtheory} has provided us with an operator analogue of a Krylov subspace for solving~\cref{eq:PDE} (see~\cref{eq:preconditionedKrylov}). Two additional Krylov subspace methods for solving $Ax=b$ are MINRES (for symmetric linear systems)~\cite{paige1975solution} and GMRES (for general linear systems)~\cite{saad1986gmres}. In the matrix setting, MINRES and GMRES generate iterates by computing the best solution to $Ax=b$ from a Krylov subspace, as measured by the Euclidean norm of the residual, i.e.,
\begin{equation} \label{eq:minresdef}
  x_k = \argmin_{ y \in \mathcal{K}_k(A,b)} \| b - Ay\|_2, \qquad \mathcal{K}_k(A,b) = {\rm Span}\left\{b,Ab,\ldots,A^{k-1}b\right\},
\end{equation}
where $\|\cdot \|_2$ denotes the Euclidean norm of a vector.

Motivated by~\cref{eq:minresdef}, we set out to derive a MINRES and GMRES method for solving~\cref{eq:PDE} that constructs iterates so that
\begin{equation}
v_k = \argmin_{ p \in \mathcal{K}_k(\mathcal{T}, \mR^*f)} \|  \mR^* f -  \mathcal{T} p  \|,
\label{eq:GMRESproblem}
\end{equation}
where $\mathcal{R}$ is given in~\cref{eq:IntegralPreconditioner}, and $\mathcal{T}$ and $\mathcal{K}_k( \mathcal{T}, \mR^*f)$ are given in~\cref{eq:preconditionedKrylov}. The hope is that the iterates $u_k = \mR v_k$ converge to the solution $u$ of~\cref{eq:PDE}. In~\cref{eq:GMRESproblem}, we assume that $\mathcal{R}^*f\in \mW_0 = \{v\in\Ltwo : \mR v\in\Honezero\}$; otherwise, the ancillary problem in \cref{sec:GeneralRHS} is used to modify the right-hand side.

\subsection{The GMRES method for differential operators}\label{sec:GMRES}
The $k$th step of the GMRES method for solving $Ax=b$ computes an orthogonal basis for $\mathcal{K}_k(A,b)$ and then solves the least squares problem in~\cref{eq:minresdef} for $x_k$.
Analogously, our operator GMRES method computes an orthogonal basis for the Krylov subspace $\mathcal{K}_k(\mathcal{T}, \mR^*f)$. The orthogonal basis is computed via the decomposition\begin{equation}\label{eq:Arnoldi}
\mathcal{T} \mathcal{Q}_k = \mathcal{Q}_{k+1} \tilde{H}_k,
\end{equation}
where $\tilde{H}_k$ is a $(k+1) \times k $ upper Hessenberg matrix and $\mathcal{Q}_k$ is a quasimatrix with $k$ orthonormal columns.\footnote{A quasimatrix is a matrix whose columns are functions~\cite{stewart1998afternotes}. The quasimatrix has orthonormal columns if the columns are orthonormal with respect to the $L^2$ inner-product.} The decomposition is computed by an Arnoldi iteration on functions in $\Ltwo$ using modified Gram--Schmidt (see~\cref{alg:arnoldi}).

\begin{algorithm}[H]
\caption{Arnoldi iteration. Here, $\smash{\mathcal{T}}$ is the operator in~\cref{eq:preconditionedKrylov} and $\smash{\mathcal{R}^*f \in\mW_{0}}$. }\label{alg:arnoldi}
\begin{algorithmic}[1]
  \STATE{ $q_1 = \mathcal{R}^*f \big/ \|\mathcal{R}^*f\| $}
  \FOR{$k =2,\ldots,m$ }
  \STATE{$q_k = \mathcal{T}q_{k-1} $ }
  \FOR{$j = 1, \ldots, k -1$}
  \STATE{ $h_{j,k-1} = \<q_j, q_k\>$, $q_k = q_k - h_{j,k-1} q_j $ }
  \ENDFOR
  \STATE{ $h_{k,k-1} =\| q_k\| $, $q_k = q_k / h_{k,k-1}$}
  \ENDFOR
\end{algorithmic}
\end{algorithm}
Once an orthogonal basis for $\mathcal{K}_k(\mathcal{T}, \mR^*f)$ is computed by~\cref{alg:arnoldi}, the iterates from~\cref{eq:GMRESproblem} can be computed as follows:
\begin{equation*}
\argmin_{ p \in \mathcal{K}_k(\mathcal{T}, \mR^*f)} \|  \mR^* f -  \mathcal{T} p  \| = \argmin_{y \in \mathbb{R}^k}\| \mR^*f - \mathcal{T}\mathcal{Q}_k y \| = \argmin_{y \in \mathbb{R}^k} \left\| \|\mR^* f \|e_1 - \tilde{H}_k y \right\|_2 \ ,
\end{equation*}
which is a standard least squares problem that is typically solved by updating a QR factorization of $\tilde{H}_k$ at each iteration using Givens rotations \cite{van2003iterative}. We derive the following operator GMRES method for~\cref{eq:PDE}.

\vspace{-.2cm}

\begin{flushleft}
\begin{minipage}[t]{.99\textwidth}
\null
\begin{algorithm}[H]
\caption{The preconditioned GMRES method for~\cref{eq:PDE}, where $\smash{\mathcal{T}}$ is the operator in~\cref{eq:preconditionedKrylov},  $\smash{\mR^*f\in\mW_{0}}$ and $0<\epsilon<1$ is a tolerance on the norm of the residual.}\label{alg:gmres}
\begin{algorithmic}[1]
\FOR{$k =1,2,\ldots,$ }
\STATE{ Compute $\mathcal{Q}_{k+1}$ and $\tilde{H}_k$ in~\cref{eq:Arnoldi} using one step of~\cref{alg:arnoldi}  }
\STATE{Compute the QR factorization of $\tilde{H}_k$}
\STATE{ Solve $ \rho = \min_{y}\left\| \|\mR^* f \|e_1 - \tilde{H}_k y  \right\|$  }
\IF{ $\rho < \epsilon$  }
\STATE{ $v = \mathcal{Q}_{k} y$}
\STATE{ $u = \mR v$}
\STATE{ \bf{stop iteration} }
\ENDIF
\ENDFOR
\end{algorithmic}
\end{algorithm}
\end{minipage}
\end{flushleft}

\vspace{.5cm}

Unlike CG, the computational and storage costs of GMRES grows with the number of iterations. To avoid excessive storage costs, the GMRES method is usually restarted after $m$ iterations for some integer $m$, i.e., $v_m$ becomes an initial guess for a new GMRES method. The convergence behavior of the GMRES method is difficult to fully characterize and the statements that can be presented for convergence are analogous to those for the matrix GMRES method~\cite[Chap.~6]{van2003iterative}. \Cref{fig:gmres} (left) shows the convergence of the preconditioned GMRES on the BVP \linebreak $\smash{-(e^{x}u')' + u' - 10 u = \sin (30\pi x), u(\pm 1 ) = 0}$ for different restarts. As observed in the matrix case the convergence can deteriorate with too frequent restarts, though iterates after restarting are computed more efficiently.

The operator GMRES method is implemented in Chebfun in the \texttt{gmres} command and has precisely the same realizations as the operator CG method (see \cref{sec:CG1dAlgorithm}).
\begin{figure}
\vspace{0.3cm}
\centering
\begin{minipage}{.49\textwidth}
\begin{overpic}[width=.95\textwidth]{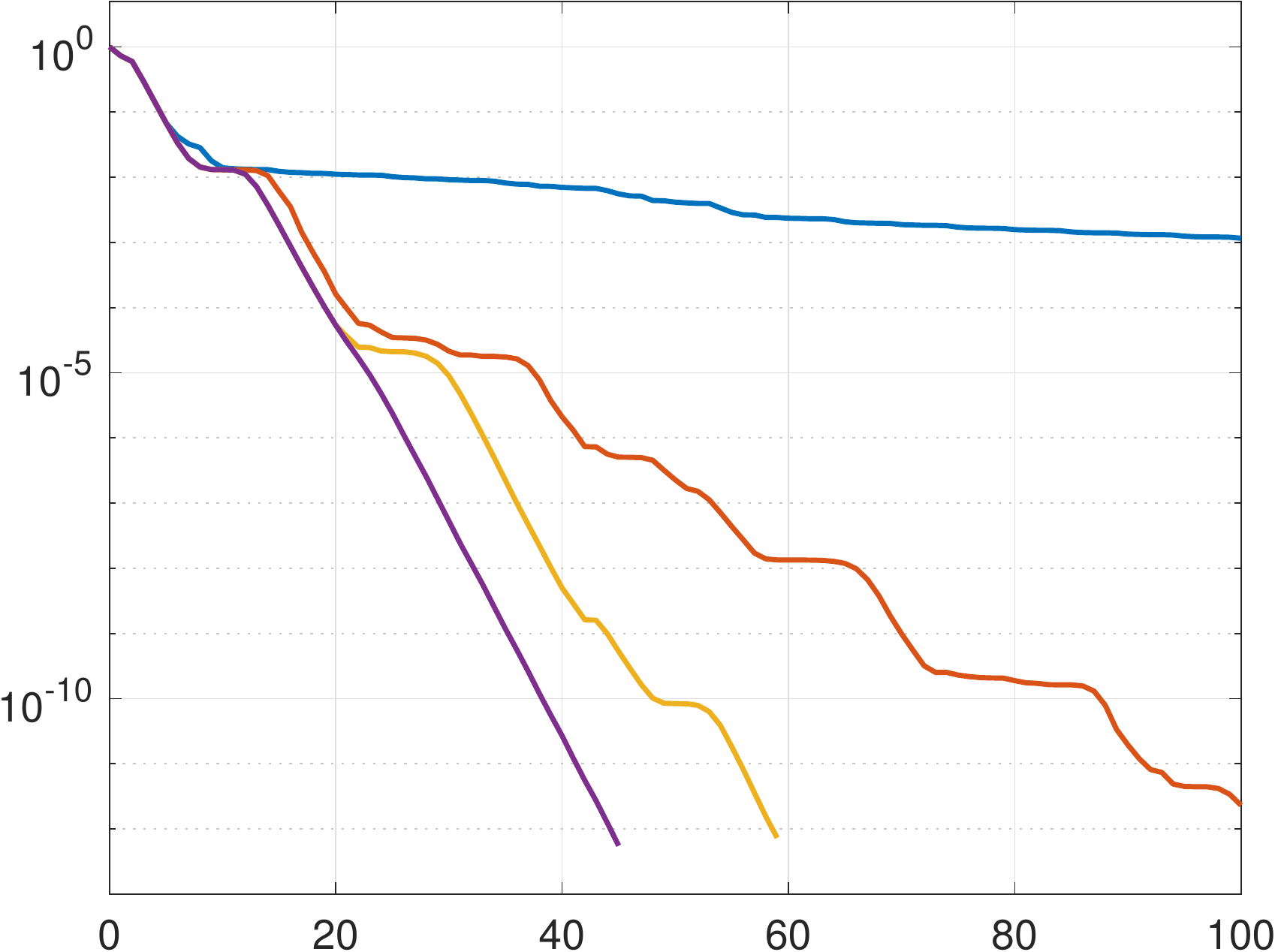}
\put(40,-4) {$k$, iteration}
\put(-7,15) {\rotatebox{90}{$\| \mR^*f - \mathcal{T} v_k \|/\| v_k \|$}}
\put(-4,77) {$-(e^{ x}u')' + u' - 10 u  = \sin (30\pi x), u(\pm 1) = 0$}
\put(38.5,29.5) {\rotatebox{-63}{$m= 100$}}
\put(42,35) {\rotatebox{-53}{$m = 20$}}
\put(53,40) {\rotatebox{-30}{$m = 10$}}
\put(70,58) {\rotatebox{0}{$m = 5$}}
\end{overpic}
\end{minipage}
\begin{minipage}{.49\textwidth}
\centering
\begin{overpic}[width=.95\textwidth]{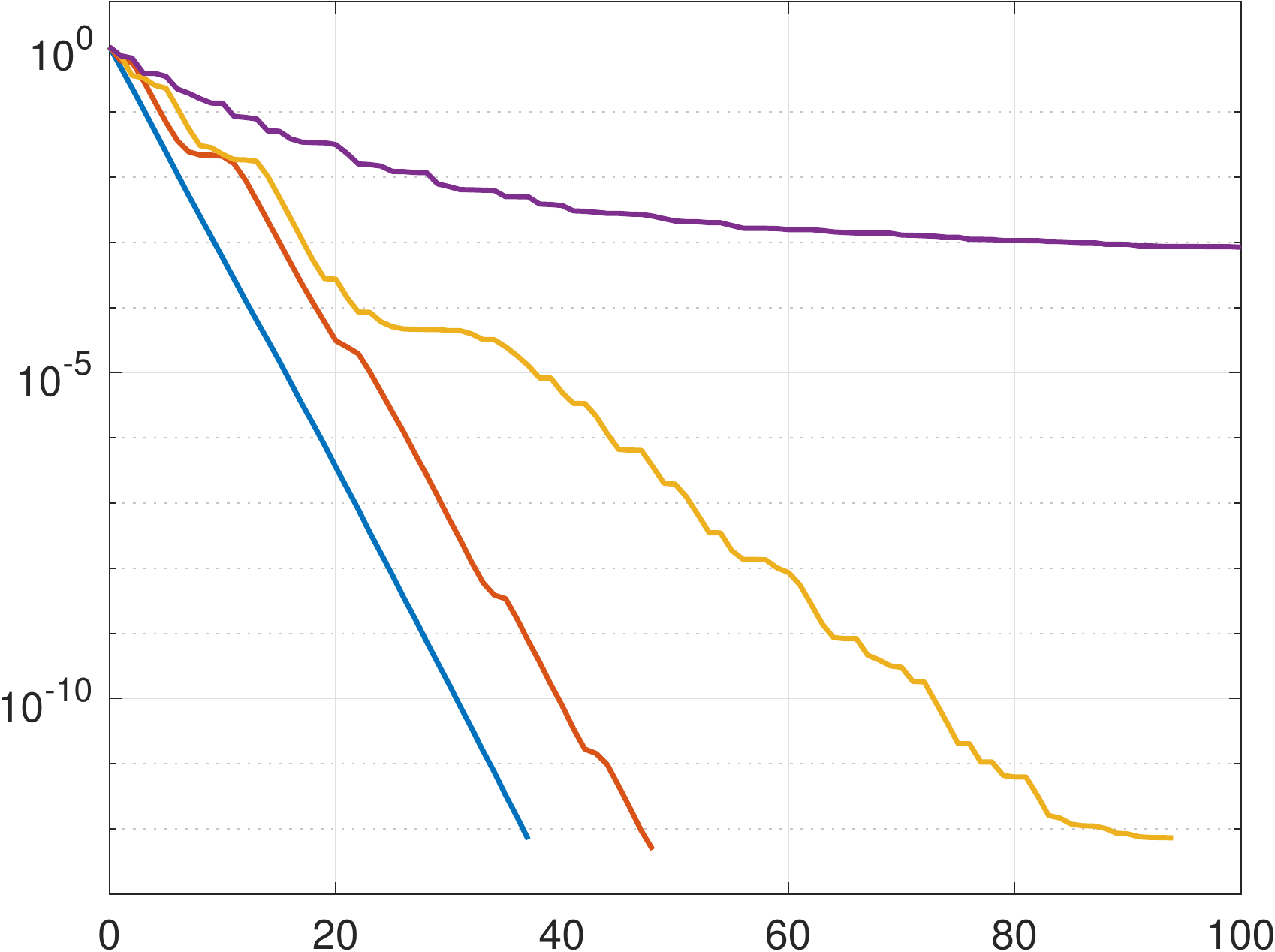}
\put(40,-4) {$k$, iteration}
\put(-7,15) {\rotatebox{90}{$\| \mR^*f - \mathcal{T} v_k \|/\| v_k \|$}}
\put(4,77) {$-(e^{x}u')' - \lambda u = \sin (30\pi x), u(\pm 1) = 0$}
\put(31.5,29.5) {\rotatebox{-63}{$\lambda = 1$}}
\put(41,30) {\rotatebox{-63}{$\lambda = 10$}}
\put(53,40) {\rotatebox{-45}{$\lambda = 100$}}
\put(70,58) {\rotatebox{0}{$\lambda = 1000$}}
\end{overpic}
\end{minipage}
\caption{Left: Convergence of the restarted GMRES method with restarts every $m$ iterations for $m =  5$ (blue line), $m=10$ (red line), $m=20$ (yellow line), and  $m=100$ (purple line). Right: Convergence of the MINRES method for $-(e^{x}u')' - \lambda u = \sin (30\pi x)$ with $u(\pm 1) =0$ with $\lambda = 1$ (blue line), $\lambda = 10$ (red line), $\lambda=100$ (yellow line), and $\lambda=1000$ (purple line). The quality of the indefinite integral preconditioner in~\cref{eq:IntegralPreconditioner} is reduced as $\lambda$ increases. }
\label{fig:gmres}
\end{figure}

\subsection{The MINRES method for differential operators}\label{sec:MINRES}
MINRES can be described as a special case of GMRES that applies when the linear system is symmetric. In that situation, the matrix $\tilde{H}_k$ reduces to a tridiagonal matrix and a Lanczos procedure is used instead of an Arnoldi iteration~\cite{paige1975solution}.  For self-adjoint second-order differential operators, it is analogous.  Thus, our operator MINRES method is a GMRES method without restarts that exploits the fact that the operator is self-adjoint. An optimized implementation of MINRES notes that $\mathcal{Q}_k$ and $\mathcal{H}_k$ in~\cref{eq:Arnoldi} do not need to be stored and that the solution $y$ can be efficiently updated from previous iterates. The convergence properties of operator MINRES are analogous to the convergence behavior of MINRES for solving linear systems.

We have implemented MINRES in Chebfun in the \texttt{minres} command, which has the same practical realizations as the CG method (see \cref{sec:CG1dAlgorithm}). \Cref{fig:gmres} (right) shows the convergence of the preconditioned MINRES method on the family of BVPs $-(e^{x}u')' - \lambda u = \sin (30\pi x)$ with $u(\pm 1 ) = 0$ for different values of $\lambda$.

\section*{Conclusion}
Operator analogues of the CG method, MINRES, and GMRES are derived for solving BVPs on $(-1,1)$ that employ operator-function products. An operator preconditioner ensures that only a finite number of Krylov iterations are necessary to compute an approximate solution, and an orthogonal projection operator guarantees that the computed Krylov subspace imposes the boundary conditions of the BVP\@. The resulting iterative solvers are able to compute solutions from $\Honezero$ and are competitive BVP solvers when a fast operator-function product is available.

\section*{Acknowledgements}
The authors tried several different approaches to developing an operator analogue of Krylov methods, and we thank Jiahao Chen and Alan Edelman for interesting discussions on several unfruitful attempts. We are grateful to Nick Trefethen for popularizing the idea of operator analogues of Krylov subspace methods in 2012, and we thank Jared Aurentz, Sheehan Olver, and Marcus Webb for sharing their thoughts on the topic. We also thank David Bindel, Toby Driscoll, Dan Fortunato, Hadrien Montanelli, and Heather Wilber for comments and reactions on different versions of this work.

\bibliographystyle{siam}
\bibliography{ContinuousCG}

\end{document}